\newcolumntype{L}{>{$}l<{$}} % math-mode version of "l" column type
\newcommand{\R}{\mathbb{R}}
\newcommand{\C}{\mathbb{C}}
\newcommand{\s}[1]{\mathcal{ #1 }}
\newcommand{\bb}[1]{\mathbb{ #1 }}
\newcommand{\fr}[1]{ \mathfrak{ #1} }
\newcommand{\vph}{\varphi}
\newcommand{\vth}{\vartheta}
\newcommand{\vep}{\varepsilon}
\newcommand{\ol}[1]{\overline{ #1 }}
\newcommand{\ox}{ \otimes }
\newcommand{\pr}{\prime}
\begin{document}

\newtheorem{thm}{Theorem}[section]
\newtheorem*{thm*}{Main Theorem}

\newtheorem{prop}[thm]{Proposition}

\newtheorem{lem}[thm]{Lemma}

\newtheorem{cor}[thm]{Corollary}
\newtheorem*{cor*}{Corollary}

\theoremstyle{definition}
\newtheorem{dfn}[thm]{Definition}

\theoremstyle{remark}
\newtheorem{rmk}[thm]{Remark}
\newtheorem{ex}[thm]{Example}

\title{Strong $1$-Boundedness of Unimodular Free Orthogonal Quantum Groups}

\author{Floris Elzinga}
\address{Department of Mathematics, University of Oslo, P.O box 1053, Blindern, 0316 Oslo, Norway}
\email{florise@math.uio.no}

\begin{abstract}
Recently, Brannan and Vergnioux showed that the free orthogonal quantum group factors $\mathcal{L}\mathbb{F}O_M$ have Jung's strong $1$-boundedness property, and hence are not isomorphic to free group factors.
We prove an analogous result for the other unimodular case, where the parameter matrix is the standard symplectic matrix in $2N$ dimensions $J_{2N}$.
We compute free derivatives of the defining relations by introducing self-adjoint generators through a decomposition of the fundamental representation in terms of Pauli matrices, resulting in $1$-boundedness of these generators.
Moreover, we prove that under certain conditions, one can add elements to a $1$-bounded set without losing $1$-boundedness.
In particular this allows us to include the character of the fundamental representation, proving strong $1$-boundedness.
\end{abstract}

\maketitle

\section{Introduction}

The C$^*$-algebras and von Neumann algebras associated to discrete groups form a rich and important class of examples.
The theory of discrete quantum groups, dual to Woronowicz's compact quantum groups \cite{Wo87,Wo98}, has in recent years proven itself to be another fruitful source of interesting C$^*$-algebras and von Neumann algebras.
The discrete duals of the free orthogonal and free unitary quantum groups of Van Daele and Wang \cite{W95,VDW96}, depending on an invertible complex $N\times N$ matrix parameter $Q$, have been particularly well studied.

Write $\bb{F}O(Q)$ for the free orthogonal quantum group associated to a general $Q$ and let $J_{2N}$ be the standard symplectic matrix in $2N$ dimensions.
We will use the notations $\bb{F}O_N = \bb{F}O(I_N)$ and $\bb{F}O^J_{2N} = \bb{F}O(J_{2N})$ for the unimodular free orthogonal quantum groups.
These two cases are of particular interest, as their associated quantum group von Neumann algebras $\s{L}\bb{F}O_N$ and $\s{L}\bb{F}O^J_{2N}$ share many properties with the free group factors \cite{B96,BC07,VV07,B12,F13,B14,DCFY14,FV15,I15,C18}.
Whether or not they could be isomorphic to a free group factor $\s{L}\bb{F}_M$ remained open for over 20 years, until it was recently settled for $Q = I_N$ by Brannan and Vergnioux \cite{BV18}.
They distinguish $\s{L}\bb{F}O_N$ from the free group factors by proving that it satisfies strong $1$-boundedness, a free probabilistic property due to Jung \cite{J07}.
The main result of the present paper is that this property also holds when $Q = J_{2N}$.

\begin{thm*}[See Theorem \ref{thm:MR}]
The free orthogonal quantum group von Neumann algebras $\s{L}\bb{F}O^J_{2N}$ are strongly $1$-bounded for $N\geq 2$.
\end{thm*}
\noindent
Combined with the work of Brannan and Vergnioux, this yields the following corollary.

\begin{cor*}[See Corollary \ref{cor:CTMR}]
Let $Q\in \mathrm{GL}_N(\C)$, $N\geq 3$, be such that $Q\ol{Q} \in \C I_N$ and such that $\bb{F}O(Q)$ is unimodular.
Then $\s{L}\bb{F}O(Q)$ is not isomorphic to any finite von Neumann algebra admitting a tuple of self-adjoint generators whose (modified) free entropy dimension exceeds $1$.
In particular this excludes being isomorphic to a(n interpolated) free group factor.
\end{cor*}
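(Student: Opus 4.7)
The plan is to reduce the corollary to the two cases $\s{L}\bb{F}O_N$ and $\s{L}\bb{F}O^J_{2M}$ by putting $Q$ into normal form, then combine the Main Theorem with the Brannan--Vergnioux result \cite{BV18} via Jung's invariance of strong $1$-boundedness to exclude isomorphism with finite von Neumann algebras of free entropy dimension exceeding $1$.

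For the normal-form step, I would use the two standard invariances of the construction: $\bb{F}O(Q)\cong \bb{F}O(UQU^t)$ for every invertible $U$, and the fact that scaling $Q$ by a nonzero complex scalar leaves $\bb{F}O(Q)$ unchanged. Unimodularity of $\bb{F}O(Q)$ is equivalent to $Q^*Q\in \R_{>0}\, I_N$, so rescaling reduces to $Q$ unitary. The hypothesis $Q\ol{Q}\in \C I_N$ then forces $Q\ol{Q}=\pm I_N$ (the scalar is automatically real, and squares to $1$ once $Q$ is unitary), equivalently $Q^t=\pm Q$. Applying Autonne--Takagi in the symmetric case and its antisymmetric analogue in the skew case brings $Q$ to $I_N$ or to $J_{2M}$ respectively, the latter requiring $N=2M$ even. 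The hypothesis $N\geq 3$ therefore lands us in the range $N\geq 3$ for the Brannan--Vergnioux theorem and $M\geq 2$ for the Main Theorem, so $\s{L}\bb{F}O(Q)$ is strongly $1$-bounded in either sub-case.

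For the non-isomorphism, I would invoke Jung's theorem \cite{J07} that strong $1$-boundedness is a tracial von Neumann algebra invariant and implies that every finite self-adjoint generating tuple has modified microstates free entropy dimension at most $1$. If $\s{L}\bb{F}O(Q)$ were isomorphic to a finite von Neumann algebra $M$ admitting a self-adjoint generating tuple of free entropy dimension exceeding $1$, transporting that tuple through the isomorphism would produce a self-adjoint generating tuple of $\s{L}\bb{F}O(Q)$ of free entropy dimension exceeding $1$, contradicting strong $1$-boundedness. The (interpolated) free group factors fall into this class via their standard semicircular generators. The only step requiring any care is the classification of $Q$ up to the equivalence $Q\mapsto UQU^t$; the remainder is a direct application of cited results.
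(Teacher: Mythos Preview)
Your proposal is correct and follows essentially the same route as the paper: the paper's proof of Corollary~\ref{cor:CTMR} likewise reduces to the two normal forms $Q=I_N$ and $Q=J_{2M}$ (citing \cite[Section~9.1]{B17} for the classification rather than spelling out the Autonne--Takagi argument as you do) and then appeals to \cite{BV18} and the Main Theorem, with Jung's invariance result \cite{J07} supplying the non-isomorphism conclusion. Your normal-form sketch is more detailed than the paper's one-line citation, but the logical structure is identical.
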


Evidence pointing towards this outcome had already appeared in the literature.
Vergnioux \cite{Ve12} and Bichon \cite{B13} proved that the first $L^2$-Betti number vanishes for both $\bb{F}O_N$ and $\bb{F}O^J_{2N}$.
Using this, it can be shown that Voiculescu's modified microstates free entropy dimension $\delta_0$ and non-microstates free entropy dimension $\delta^*$ \cite{V3,V5} give different results for the canonical set of generators in $\s{L}\bb{F}O_N$ or $\s{L}\bb{F}O^J_{2N}$, and $\s{L}\bb{F}_M$ respectively \cite{BCV17}.

It is unknown whether or not free entropy dimension is a von Neumann algebra invariant in general, but this is the case for strongly $1$-bounded von Neumann algebras \cite{J07}.
In a finite von Neumann algebra $\s{M}$ with faithful normal tracial state $\tau$, a finite tuple $X_1,\dots,X_n\in\s{M}$ of self-adjoint elements is called \emph{$1$-bounded} (without the `strong') if it satisfies a condition that is slightly stronger than $\delta_0(X_1,\dots,X_n) \leq 1$ (see Section \ref{ssec:FPaDCO}).
If $\s{M}$ admits self-adjoint generators $X_1,\dots,X_n$ that form a $1$-bounded tuple, and at least one of the $X_i$ has finite free entropy, $\s{M}$ is said to be \emph{strongly $1$-bounded}.
Jung introduced these definitions and showed that for a strongly $1$-bounded von Neumann algebra $\s{N}$, any finite set of self-adjoint generators $Y_1,\dots,Y_m\in \s{N}$ must satisfy $\delta_0(Y_1,\dots,Y_m) \leq 1$.
This forbids $\s{N}$ being isomorphic to any interpolated free group factor $\s{L}\bb{F}_r$ for $1 < r \leq \infty$ \cite[Section 3]{J07}.

Checking directly that the canonical generators of $\s{L}\bb{F}O_N$ and $\s{L}\bb{F}O^J_{2N}$ form a $1$-bounded set turns out to be difficult.
Instead, the strategy of \cite{BV18} for $\bb{F}O_N$ relies on results of Jung \cite{J16} and Shlyakhtenko \cite{S16}.
The quantum group von Neumann algebra $\s{L}\bb{F}O_N$ has $N^2$ self-adjoint operators $u = (u_{ij})_{i,j=1}^N$ as its canonical set of generators.
These generators satisfy some polynomial relations $F$, i.e.\ $F(u) = 0$ in $\s{L}\bb{F}O_N$.
One then considers the free derivatives $\partial F(u)$ of the relations $F$ with respect to the generators $u_{ij}$.
The results of Jung and Shlyakhtenko now say that in order to conclude $1$-boundedness of $u$, it is sufficient to prove that the operator $D = \partial F(u)^*\partial F(u)$ is of determinant class and has rank $N^2-1$ (see Section \ref{ssec:FPaDCO} for details).

Brannan and Vergnioux achieve this by computing the operator $D$ and relating it to something called the \emph{edge-reversing operator} on the \emph{quantum Cayley tree} due to Vergnioux \cite{Ve05,Ve12}.
Regularity results for this edge-reversing operator are proved in \cite{BV18} for many $\bb{F}O(Q)$, including the cases $Q = I_N,J_{2N}$.
The computation of the rank of $D$ proceeds by expressing the rank in terms of $L^2$-Betti numbers, which are known for all free orthogonal quantum groups.
To complete the proof, there are calculations by Banica, Collins, and Zinn-Justin \cite{BCZJ9} which imply that every $u_{ij}$ individually has finite free entropy.

There are two obstacles to generalising this proof to the case of $\bb{F}O^J_{2N}$.
The first is that the canonical generators are no longer self-adjoint, complicating the determination of $\partial F$.
We will remedy this by choosing a convenient set of self-adjoint generators using a decomposition of the fundamental representation in terms of Pauli matrices, which have simple algebraic properties and relations.
Fortunately, the connection to the edge-reversing operator remains intact, allowing us to conclude that our new set of generators is $1$-bounded.

The second obstacle is that calculations like \cite{BCZJ9} are not available for $\bb{F}O^J_{2N}$.
We sidestep this by proving a technical result of independent interest, inspired by a relative free entropy estimate due to Voiculescu \cite{V3}.
This lemma states that under certain regularity conditions, one is allowed to add redundant elements to a generating set without spoiling $1$-boundedness.
This works in particular if the redundant element is a noncommutative polynomial in the generators.
It is a result of Banica that the character of the fundamental representation of $\bb{F}O^J_{2N}$ is a semicircular element \cite{B96}, and hence possesses finite free entropy.
As the fundamental character is a linear combination of generators, we have completed the proof.
Note that this method also applies to $\bb{F}O_N$, removing the dependence on the non-trivial results of \cite{BCZJ9}.

The remainder of this paper is structured as follows.
In Section \ref{sec:prelim}, we recall the necessary facts and definitions about free orthogonal quantum groups, their corepresentation theory, quantum Cayley graphs, and free probability.
In Section \ref{sec:GR1B}, we introduce generators for $\s{L}\bb{F}O^J_{2N}$, compute their free derivatives, and show how this results in $1$-boundedness.
In Section \ref{sec:ADG}, we prove a technical lemma stating conditions under which one is allowed to enlarge a $1$-bounded set without destroying $1$-boundedness.
Finally, in Section \ref{sec:MR} we prove our main result and discuss some consequences.

\textbf{Acknowledgements:} The author wishes to thank his supervisor Makoto Yamashita for many valuable discussions and suggesting the topic.

\section{Preliminaries}\label{sec:prelim}

We will keep our notations and conventions close to \cite{BV18}.
Generally, the letters $H$, $K$, and $L$ represent (separable) Hilbert spaces, and $\s{K}(H)$ or $\s{U}(H)$ denotes the compact or unitary operators on the Hilbert space $H$ respectively.
All von Neumann algebras are assumed to have a separable predual.
We write $H\ox K$ for the tensor product of Hilbert spaces, and the same symbol is also used for the minimal tensor product of C$^*$-algebras.
Put $\Sigma$ for the map $H\ox K\rightarrow K\ox H$ that flips the tensor legs.
The Greek letter $\iota$ will be used as a generic symbol for any identity map.
We will also make use of leg numbering notation, which we will explain by example.
If $x,y$ are elements of a unital algebra $\s{A}$, then $\s{A}^{\ox 3}\ni (x\ox y)_{31} = y\ox 1\ox x$, while $\s{A}^{\ox 4}\ni (x\ox y)_{13} = x\ox 1\ox y\ox 1$, and so on.
It will always be clear from the context in which space the tensors lie.
For an operator $V$ on $H\ox H$, we have for instance that $V_{32} = \iota\ox(\Sigma V\Sigma)$ on $H\ox H\ox H$.
We write $I_N$ for the $N\times N$ identity matrix and $J_{2N}$ denotes the standard $2N\times 2N$ symplectic matrix
\begin{align*}
J_{2N} = \begin{pmatrix} 0_N & I_N \\ -I_N & 0_N \end{pmatrix} .
\end{align*}

\subsection{Free Orthogonal Quantum Groups}\label{ssec:FOQG}

For brevity, we will discuss discrete quantum groups within the context of $\bb{F}O(Q)$.

\begin{dfn}
Let $N\geq 2$ and $Q\in \mathrm{GL}_N(\C)$ such that $Q\ol{Q}\in\C I_N$, where the bar denotes taking the adjoint (i.e.\ complex conjugate) entry-wise.
Then the \emph{free orthogonal} quantum group $\bb{F}O(Q)$ is given by the unital Woronowicz C$^*$-algebra
\begin{align}
C^*\bb{F}O(Q) = \left\langle u_{ij} \bigm| 1\leq i,j\leq N, ~ u ~ \mathrm{unitary} , ~ Q \ol{u} Q^{-1} = u  \right\rangle , \label{eq:WCSA}
\end{align}
where $u$ denotes the matrix $(u_{ij})_{ij}\in M_N(\C)\ox C^*\bb{F}O(Q)$.
The matrix $u$ is the \emph{fundamental representation} of $\bb{F}O(Q)$, and the \emph{coproduct} $\Delta\colon C^*\bb{F}O(Q)\rightarrow C^*\bb{F}O(Q)\ox C^*\bb{F}O(Q)$ takes the form
\begin{align*}
\Delta(u_{ij}) = \sum_{k=1}^N u_{ik}\ox u_{kj}
\end{align*}
on its entries.
The coproduct $\Delta$ is a co-associative unital $\ast$-homomorphism satisfying the \emph{cancellation property} that the subspaces
\begin{align*}
\mathrm{span}\left\{ (x\ox 1)\Delta(y) \mid x,y\in C^*\bb{F}O(Q) \right\} &\subset C^*\bb{F}O(Q)\ox C^*\bb{F}O(Q) , \\
\mathrm{span}\left\{ (1\ox x)\Delta(y) \mid x,y\in C^*\bb{F}O(Q) \right\} &\subset C^*\bb{F}O(Q)\ox C^*\bb{F}O(Q) ,
\end{align*}
are dense.
\end{dfn}

These algebras come with a unique invariant state $h$, called the \emph{Haar} state, where invariance means that $(h\ox\iota)\Delta(x) = h(x)1 = (\iota\ox h)\Delta(x)$ for all $x\in C^*\bb{F}O(Q)$.
If $h$ is a trace, then $\bb{F}O(Q)$ is said to be \emph{unimodular}.
It is known (see \cite[Section 9.1]{B17}) that $\bb{F}O(Q)$ is unimodular when either $Q = I_N$ or $Q = J_{2N}$ (up to isomorphism).
Hence we introduce the special notations $\bb{F}O_N = \bb{F}O(I_N)$ and $\bb{F}O^J_{2N} = \bb{F}O(J_{2N})$.

One also has an involutive $\ast$-anti-automorphism $R$ of $C^*\bb{F}O(Q)$ such that $\Delta R = (R\ox R)\Sigma\Delta$, called the \emph{unitary antipode}.
The ordinary \emph{antipode} $S$ is an anti-automorphism of the $\ast$-algebra generated by the $u_{ij}$ with the property that $(\iota\ox S)(u) = u^*$.
In the unimodular case, the maps $R$ and $S$ are the same.

Applying the GNS construction to the Haar state $h$ gives a Hilbert space $\ell^2\bb{F}O(Q) = H_Q$ with canonical cyclic unit vector $\xi_0$ implementing $h$ as a vector state.
This representation gives rise to the \emph{reduced} quantum group C$^*$-algebra $C_r^*\bb{F}O(Q)$ and the \emph{quantum group von Neumann algebra} $\s{L}\bb{F}O(Q)$ in the usual ways.

On $C_r^*\bb{F}O(Q)$, the comultiplication $\Delta$ is implemented by an operator $V\in\s{U}(H_Q\ox H_Q)$ as $\Delta(y) = V(y\ox 1)V^*$.
This \emph{multiplicative unitary} $V$ is defined explicitly by $V(x\xi_0\ox y\xi_0) = \Delta(x)(1\ox y)(\xi_0\ox\xi_0)$ for $x,y\in C^*\bb{F}O(Q)$, and witnesses the \emph{pentagon equation} $V_{12}V_{13}V_{23} = V_{23}V_{12}$.
The unitary antipode $R$ descends to give an involutive unitary $U$ on $H_Q$ by $U(x\xi_0) = R(x)\xi_0$ for $x\in C^*\bb{F}O(Q)$.

We recall some facts about the free orthogonal quantum groups and the parallels to the free group factors on the von Neumann algebraic level.
If one takes an identity matrix $I_N$ in the Definition \eqref{eq:WCSA} above, the orthogonal free quantum groups $\bb{F}O_N$ are obtained.
This family is both a liberation of $C(O_N)$ and its diagonal elements (setting all off-diagonal elements to zero) are related to the full group C$^*$-algebra of the $N$-fold free product group $\bb{Z}_2\ast\cdots\ast\bb{Z}_2$ \cite{W95}.
This explains the $\bb{F}$ and the $O$ appearing in $\bb{F}O_N$.

As we are taking the point of view of discrete quantum groups, we use the notation $C^*\bb{F}O_N$ to underline the analogy with the full group C$^*$-algebra mentioned above.
If one takes the point of view of compact quantum groups instead, the notation $C^*\bb{F}O_N = C^u(O_N^+)$ is more natural in light of the relation to the orthogonal group $O_N$.
The original notation $A_o(N)$ (and more generally $A_o(Q)$) of van Daele and Wang is also common.
For general $Q$, we have a family of deformations of this Woronowicz C$^*$-algebra that still satisfy many of the same properties.

The analogy with free groups becomes stronger when one considers approximation properties.
It is a result of Banica \cite{B96} that $\bb{F}O(Q)$ is `generically' non-amenable, that is if and only if $N\geq 3$.
De Commer, Freslon, and Yamashita \cite{DCFY14} proved that $\bb{F}O(Q)$ has the Haagerup property and is weakly amenable with Cowling--Haagerup constant $1$ (also referred to as the CCAP or CMAP), generalising results by Brannan \cite{B12} and Freslon \cite{F13}.

This trend continues on the von Neumann algebraic level.
By \cite{I15,C18,FV15} it holds that $\s{L}\bb{F}O(Q)$ is strongly solid and has no Cartan subalgebra.
With some restrictions on $Q$, Vaes and Vergnioux \cite{VV07} showed that $\s{L}\bb{F}O(Q)$ is a full factor and hence prime.
In particular, if $QQ^* = I_N$ and $N\geq 3$, then $\s{L}\bb{F}O(Q)$ is a factor of type II$_1$.
Recall that $\bb{F}O(Q)$ is unimodular for $Q = I_N,J_{2N}$.
Thus the analogy between the free orthogonal quantum group von Neumann algebras $\s{L}\bb{F}O_N$ and $\s{L}\bb{F}O_{2N}^J$ on one hand and the free group factors $\s{L}\bb{F}_M$ on the other is especially striking.
It was even shown that the series $\{\s{L}\bb{F}O_N\}$ has free group factor-like asymptotics in a strong sense \cite{BC07,B14}.

\subsection{Corepresentations}\label{ssec:CoRep}

All constructions in this section are general, but we state them for $\bb{F}O(Q)$.
We refer to \cite{NT13} for the general theory of the representation categories of discrete and compact quantum groups.

A \emph{unitary corepresentation} of $\bb{F}O(Q)$ on a Hilbert space $H$ is defined as a unitary operator $v$ which lies in the multiplier algebra $M(\s{K}(H)\ox C^*\bb{F}O(Q))$ and which interacts with the comultiplication as $(\iota\ox\Delta)v = v_{12}v_{13} \in M(\s{K}(H)\ox C^*\bb{F}O(Q)\ox C^*\bb{F}O(Q))$.
The fundamental representation $u$ and the multiplicative unitary $V$ are important examples.

Taking all finite dimensional unitary corepresentations of $\bb{F}O(Q)$ as objects and their intertwiners as morphisms yields a rigid C$^*$-tensor category when equipped with the obvious direct sum and the tensor product $v\ox w = v_{13}w_{23}$.
Write $v_\mathrm{triv}$ for the trivial corepresentation on $\C$ represented by $1\in C^*\bb{F}O(Q)$, and choose a set of representatives Irr$(Q)$ of the irreducible corepresentations such that $u$ and $v_\mathrm{triv}$ are among them.
If $v\in\mathrm{Irr}(Q)$, write $H_v$ for its Hilbert space.

The algebraic direct sum $\bigoplus_{v\in\mathrm{Irr}(Q)} B(H_v)$ is dense in $H_Q$.
Restricting the multiplicative unitary $V$ to this subspace gives the decomposition $V = \sum_{v\in\mathrm{Irr}(Q)} v$ acting by left multiplication.
Using the $c_0$ direct sum instead, one forms the \emph{dual algebra} $c_0(\bb{F}O(Q)) = c_0(Q) = \bigoplus_{v\in\mathrm{Irr}(Q)}^{c_0} B(H_v)$, again acting by left multiplication on the subspace defined above.
It turns out that $V\in M(c_0(Q)\ox C^*_r\bb{F}O(Q))$.
There are two minimal central projections $p_0,p_1\in Z(M(c_0(Q)))$ such that $p_0 H_Q = B(H_{v_\mathrm{triv}}) \cong \C\xi_0$ and $p_1 H_Q = B(H_u) \cong M_N(\C)$.
Note that $p_0 p_1 = 0$ and $Up_1 = p_1 U$.

\subsection{Quantum Cayley Trees}\label{ssec:QCT}

To the pair $\bb{F}O(Q)$ and $p_1$, one can associate a \emph{quantum Cayley tree} \cite{Ve05}.
This consists of the following four pieces of data.
We have the Hilbert spaces $H_Q$ and $K_Q = H_Q\ox p_1 H_Q$, to be thought of as the vertex and edge spaces respectively.
There is a bounded linear operator $E$ from $K_Q$ to $H_Q\ox H_Q$, called the \emph{boundary operator}, given by restricting the multiplicative unitary $V$ to $K_Q$.
Finally, we have the important \emph{edge-reversing operator} $\Theta = \Sigma(1\ox U)V(U\ox U)\Sigma \in B(K_Q)$ (this uses $Up_1 = p_1 U$).
Note that $\Theta$ need not be involutive, but it is unitary.

Let us explain how this generalises the classical Cayley graph.
Let $G$ be a discrete group, and consider its group C$^*$-algebra $C^*G$ with the coproduct $\Delta(g) = g\ox g$.
It is easy to see that $\Delta$ is \emph{cocommutative}, that is $\Sigma\Delta = \Delta$.
A standard fact in this context is that the unitary antipode $R$ is given by $R(g) = g^{-1}$.
Passing to the reduced group C$^*$-algebra $C^*_r G$, we write $\{\delta_g\}$ for the orthonormal basis of $\ell^2 G$ given by the point-indicator sequences, and $\lambda\colon C^* G\rightarrow B(\ell^2 G)$ for the left regular representation.
The definition of the multiplicative unitary $V$ becomes
\begin{align*}
V(\lambda_g\delta_e\ox\lambda_h\delta_e) = (\lambda_g\ox\lambda_g)(1\ox\lambda_h)(\delta_e\ox\delta_e) = (\lambda_{g}\delta_e\ox\lambda_{gh}\delta_e) .
\end{align*}

The vertex Hilbert space is now just $\ell^2 G$.
The right analogue of $p_1$ in this context turns out to be the indicator sequence of a set $H\subset G$, not containing the neutral element $e$ and closed under inverses.
As the boundary operator $E$ is just a restriction of $V$, we see that the `boundary' of an edge $(\delta_g\ox\delta_h)$ is $(\delta_g\ox\delta_{gh})$.
Thus we should view $(\delta_g\ox\delta_h)$ as an edge in the classical Cayley graph that starts at $g$, and whose endpoint is given by right translating by $h$, i.e.\ $gh$.
Accordingly, the edge-reversing operator acts as
\begin{align*}
\Theta(\delta_g\ox\delta_h) = \Sigma(1\ox U)V(\delta_{h^{-1}}\ox\delta_{g^{-1}}) = \Sigma(1\ox U)(\delta_{h^{-1}}\ox \delta_{h^{-1}g^{-1}}) = \delta_{gh}\ox\delta_{h^{-1}}.
\end{align*}

\subsection{Free Probability and Determinant Class Operators}\label{ssec:FPaDCO}

Throughout this section $(\s{M},\tau)$ is a finite von Neumann algebra with faithful normal tracial state $\tau$.
Let $X_1,\dots,X_n$ and $Y_1,\dots,Y_m$ be self-adjoint elements in $\s{M}$.
In \cite{V2}, Voiculescu introduced the \emph{microstates free entropy} $\chi(X_1,\dots,X_n)$.
This relies on the notion of microstates $\Gamma(X_1,\dots,X_n ; \ell, k, \vep)$ of $X_1,\dots,X_n$, which are $n$-tuples of $k\times k$ self-adjoint complex matrices that approximate the moments of the $X_i$ up to degree $\ell$ within precision $\vep$.
The microstates free entropy $\chi$ is then a normalised limit over the logarithm of the volume of sets of microstates.

For later use, we state a finiteness result for the microstates free entropy of a single self-adjoint element $X\in\s{M}$.
It is a direct consequence of the formula
\begin{align*}
\chi(X) = \iint \log\abs{s-t} \mathrm{d}\mu_X(s) \mathrm{d}\mu_X(t) + \frac{3}{4} + 2^{-1} \log(2\pi) ,
\end{align*}
which can be found in Proposition 4.5 of \cite{V2}.

\begin{lem}\label{lem:fomfe}
Let $X = X^* \in \s{M}$ and write $\mu_X$ for its spectral distribution with respect to $\tau$.
If $\mu_X$ admits an essentially bounded density with respect to the Lebesgue measure on $\R$, then $\chi(X)$ is finite.
\end{lem}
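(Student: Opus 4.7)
The plan is to reduce finiteness of $\chi(X)$ to finiteness of the logarithmic energy integral
\begin{equation*}
I(\mu_X) = \iint \log\abs{s-t}\, \mathrm{d}\mu_X(s)\, \mathrm{d}\mu_X(t)
\end{equation*}
via the cited formula from \cite{V2}, since the remaining summands $\tfrac{3}{4} + \tfrac{1}{2}\log(2\pi)$ are finite constants. So the task is to show that $I(\mu_X)$ takes a finite value in $\R$.

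First I would observe that since $X$ is a self-adjoint element of a von Neumann algebra, it is bounded, so its spectral distribution $\mu_X$ is supported on a compact interval $[-R,R]$ with $R = \norm{X}$. In particular $\log\abs{s-t} \leq \log(2R)$ on the support $[-R,R]^2$, which gives an immediate finite upper bound on $I(\mu_X)$. The only way $I(\mu_X)$ could fail to be finite is by diverging to $-\infty$ near the diagonal $s=t$.

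For the lower bound I would use the hypothesis: write $f$ for the density of $\mu_X$ with $\norm{f}_\infty \leq C$. Then
\begin{equation*}
\iint \abs{\log\abs{s-t}}\, \mathrm{d}\mu_X(s)\, \mathrm{d}\mu_X(t) \leq C^2 \iint_{[-R,R]^2} \abs{\log\abs{s-t}}\, \mathrm{d}s\, \mathrm{d}t.
\end{equation*}
By Fubini and a change of variables $u = s-t$, the right-hand side is dominated by a constant multiple of $\int_{-2R}^{2R} \abs{\log\abs{u}}\, \mathrm{d}u$, which is finite because $\log$ is locally integrable on $\R$. Hence $\log\abs{s-t}$ is absolutely integrable against $\mathrm{d}\mu_X\otimes\mathrm{d}\mu_X$, so $I(\mu_X)$ is a finite real number and $\chi(X) > -\infty$.

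I do not expect a serious obstacle here: the argument is essentially just the local integrability of $\log$ combined with absorbing the bounded density into an $L^\infty$ estimate. The only subtle point to mention is that one must first invoke boundedness of $X$ to reduce to a compact interval, so that the upper tail of $\log\abs{s-t}$ does not contribute.
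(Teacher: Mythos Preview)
Your proposal is correct and matches the paper's approach: the paper does not give a separate proof but simply states that the lemma is a direct consequence of Voiculescu's explicit formula for $\chi(X)$ in terms of the logarithmic energy of $\mu_X$, and your argument fills in exactly the routine verification that this integral is finite when $\mu_X$ has bounded density on a compact interval.
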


We next recall the \emph{relative} microstates free entropy $\chi(X_1,\dots,X_n : Y_1,\dots,Y_m)$ \cite{V3}.
This is defined in the same way, except one considers \emph{relative} microstates $\Gamma(X_1,\dots,X_n : Y_1,\dots,Y_m ; \ell, k, \vep)$.
These are the projections onto the first $n$ factors of the microstates $\Gamma(X_1,\dots,X_n,Y_1,\dots,Y_m ; \ell, k, \vep)$.
We record some of its properties that will be used later.

\begin{prop}\label{prop:RMFEP}
The relative microstates free entropy satisfies
\begin{enumerate}[(i)]
\item Domination by the microstates free entropy and global upper bound
\begin{align*}
\chi(X_1,\dots,X_n : Y_1,\dots, Y_n) \leq \chi(X_1,\dots,X_n) \leq \frac{n}{2} \log\left[ \frac{2\pi e}{n} \tau\left( X_1^2 + \dots + X_n^2 \right) \right] .
\end{align*}
\item $\chi$ is `subadditive'
\begin{align*}
\chi(X_1,\dots,X_n : Y_1,\dots,Y_m) \leq \chi(X_1,&\dots,X_p : X_{p+1},\dots,X_n,Y_1,\dots,Y_m) \\
&+ \chi(X_{p+1},\dots,X_n : X_1,\dots,X_p,Y_1,\dots,Y_m) .
\end{align*}
\item Let $Z_1,\dots,Z_q\in\s{M}$ be self-adjoint and lying in the von Neumann algebra generated by $Y_1,\dots,Y_m$, then
\begin{align*}
\chi(X_1,\dots,X_n : Y_1,\dots,Y_m) \leq \chi(X_1,\dots,X_n : Z_1,\dots,Z_q) .
\end{align*}
\item If $Y_p,\dots,Y_m$ lie in the von Neumann algebra generated by $X_1,\dots,X_n,Y_1,\dots,Y_{p-1}$, we have
\begin{align*}
\chi(X_1,\dots,X_n : Y_1,\dots,Y_m) = \chi(X_1,\dots,X_n : Y_1,\dots,Y_{p-1}) .
\end{align*}
\end{enumerate}
\end{prop}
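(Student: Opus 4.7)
The plan is to establish each property by elementary volume estimates and standard approximation arguments, in the spirit of \cite{V3}. Since $\Gamma(X_1,\dots,X_n : Y_1,\dots,Y_m;\ell,k,\vep)$ is by definition the projection of $\Gamma(X_1,\dots,X_n,Y_1,\dots,Y_m;\ell,k,\vep)$ onto the first $n$ factors, everything reduces to tracking how this projection interacts with containment in the joint microstate set and with polynomial substitution in the $Y$-coordinates.

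For (i), the first inequality is immediate: any joint microstate of $(X,Y)$ has its $X$-part lying in $\Gamma(X_1,\dots,X_n;\ell,k,\vep)$, so the projection of the joint set embeds into the absolute microstate set and the Lebesgue volume inequality survives the normalised $\log$-limit defining $\chi$. The global upper bound is the standard one: for small $\vep$, the microstate set in $M_k^{sa}(\C)^n$ sits inside a Euclidean ball of squared radius close to $k\tau(X_1^2+\cdots+X_n^2)$ in $\R^{nk^2}$; bounding the volume of such a ball by Stirling, taking $k^{-2}\log$, and adding the normalising $\frac{n}{2}\log k$ produces exactly the factor $\frac{n}{2}\log\bigl[\frac{2\pi e}{n}\tau(X_1^2+\cdots+X_n^2)\bigr]$.

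For (ii), I would apply a product-volume estimate. Given $(A_1,\dots,A_n,B_1,\dots,B_m)$ in the joint microstate set, its first $p$ coordinates lie in $\Gamma(X_1,\dots,X_p : X_{p+1},\dots,X_n,Y_1,\dots,Y_m;\ell,k,\vep)$, while the next $n-p$ coordinates lie in $\Gamma(X_{p+1},\dots,X_n : X_1,\dots,X_p,Y_1,\dots,Y_m;\ell,k,\vep)$. The full $X$-projection of the joint set is therefore contained in the Cartesian product of these two relative microstate sets, so its volume is bounded by the product of their volumes; taking $k^{-2}\log$ and passing to the iterated limit yields subadditivity.

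Properties (iii) and (iv) both hinge on the Kaplansky density/functional calculus approximation: any self-adjoint $Z\in W^*(Y_1,\dots,Y_m)$ admits bounded noncommutative polynomial approximants $P(Y_1,\dots,Y_m)$, arbitrarily close in $\|\cdot\|_2$ and uniformly bounded in operator norm. For (iii), evaluating these $P_j$ on the $Y$-coordinates of a joint microstate for $(X,Y)$ produces matrices approximating microstates for $(Z_1,\dots,Z_q)$ to any prescribed tolerance, so the $X$-projection of $\Gamma(X,Y;\ell,k,\vep)$ embeds (after loosening $\ell$ and $\vep$) into the $X$-projection of $\Gamma(X,Z;\ell',k,\vep')$, giving $\chi(X:Y)\leq\chi(X:Z)$. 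For (iv), the $\leq$ direction is (iii) applied with $Z_j=Y_j$ for $j\leq p-1$; the converse uses the hypothesis on $Y_p,\dots,Y_m$ to \emph{extend} any microstate $(A,B_1,\dots,B_{p-1})$ of $(X,Y_1,\dots,Y_{p-1})$ by setting $B_j := P_j(A,B_1,\dots,B_{p-1})$ for $j\geq p$, producing a microstate of $(X,Y_1,\dots,Y_m)$ with unchanged $X$-part and hence the reverse inequality. The main technical point throughout, and the only real obstacle, is controlling the interplay of the parameters $\ell,\vep,k$ so that the polynomial substitutions remain valid uniformly over the relevant microstate sets; this is handled exactly as in \cite{V3}.
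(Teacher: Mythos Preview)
The paper does not supply a proof of this proposition at all: it is stated in the preliminaries section as a recollection of known properties from Voiculescu's work \cite{V2,V3}, and the text moves on immediately to the definition of $\delta_0$. So there is nothing in the paper to compare your argument against.

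That said, your sketch is the standard one and is essentially how these facts are established in \cite{V2,V3}. The containments of microstate sets you describe for (i) and (ii) are correct, and the polynomial-substitution argument via Kaplansky density for (iii) and (iv) is precisely the mechanism used in \cite[Propositions~1.6--1.7]{V3}. One small quibble: in (iv) you phrase the $\leq$ direction as ``(iii) applied with $Z_j=Y_j$ for $j\leq p-1$'', but note that (iii) as stated requires the $Z$'s to lie in $W^*(Y_1,\dots,Y_m)$, which gives the inequality in the wrong direction; the easy direction of (iv) is rather that any microstate for $(X,Y_1,\dots,Y_m)$ is already a microstate for $(X,Y_1,\dots,Y_{p-1})$ upon forgetting coordinates, so $\Gamma(X:Y_1,\dots,Y_m)\subset\Gamma(X:Y_1,\dots,Y_{p-1})$ directly. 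The harder direction (extending microstates via polynomial approximants of $Y_p,\dots,Y_m$) you have right.
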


This leads us to the definition of the \emph{modified free entropy dimension} $\delta_0$ \cite{V3}.
Without loss of generality (replacing $\s{M}$ be a free product if necessary) we can assume that there is a free family of standard semicircular elements $S_1,\dots,S_n$ that are also free from the $X_i$.
Now define
\begin{align}
\delta_0(X_1,\dots,X_n) = n + \limsup_{\vep\rightarrow 0}\frac{\chi(X_1 + \vep S_1,\dots,X_n + \vep S_n\colon S_1,\dots,S_n)}{\abs{\log\vep}} . \label{eq:DOMFED}
\end{align}
It turns out that $\delta_0(X_1,\dots,X_n) \leq n$, and this inequality is saturated when the $X_i$ form a free standard semicircular family.
Thus the free group factor $\s{L}\bb{F}_M$ admits an $M$-tuple of generators such that their modified free entropy dimension is precisely $M$.

An important goal of free probability theory is to decide whether $\delta_0$ is a von Neumann algebraic invariant.
That is, is it true that when $X_1,\dots,X_n$ and $Y_1,\dots,Y_m$ generate isomorphic von Neumann algebras, then $\delta_0(X_1,\dots,X_n) = \delta_0(Y_1,\dots,Y_m)$?
An affirmative answer to this would solve the long-standing free group factor isomorphism problem.

Jung made progress in this direction when he introduced the notion of strong $1$-boundedness and showed that every generating set of a strongly $1$-bounded von Neumann algebra has modified free entropy dimension less than 1 \cite{J07}.
Hence any such von Neumann algebra is not isomorphic to a free group factor $\s{L}\bb{F}_M$ with $M\geq 2$.
The most convenient definition in our case is not the original one, but rather the equivalent final bullet point of Corollary 1.4 in \cite{J07}.

\begin{dfn}\label{dfn:aBS1B}
Let $\alpha>0$, then $X_1,\dots,X_n$ is \emph{$\alpha$-bounded} if
\begin{align}
\limsup_{\vep\rightarrow 0}\left[ \chi(X_1 + \vep S_1,\dots,X_n + \vep S_n \colon S_1,\dots,S_n) + (n-\alpha) \abs{\log\vep} \right] < \infty . \label{eq:DOAB}
\end{align}
If in addition to being $1$-bounded, at least one of the $X_i$ satisfies $\chi(X_i) > -\infty$, we say that $X_1,\dots,X_n$ are \emph{strongly $1$-bounded}.
\end{dfn}

Comparing \eqref{eq:DOAB} with the definition \eqref{eq:DOMFED} of $\delta_0$, one sees that $\alpha$-boundedness is a strengthening of the estimate $\delta_0(X_1,\dots,X_n) \leq \alpha$.
An alternate way to state the definition of $\alpha$-boundedness is to say that for small $\vep$ there is a constant $K\geq 0$, depending only on the $X_i$, such that
\begin{align*}
\chi(X_1 + \vep S_1,\dots,X_n + \vep S_n \colon S_1,\dots,S_n) \leq (\alpha-n)\abs{\log\vep} + K .
\end{align*}
Recalling Lemma \ref{lem:fomfe}, upgrading $1$-boundedness to strong $1$-boundedness can be achieved by showing that one of $X_i$ has a sufficiently regular spectral measure $\mu_{X_i}$.

\begin{rmk}
There is another approach to defining a free notion of entropy, called $\chi^*$, also due to Voiculescu \cite{V5}.
Instead of going through microstates, $\chi^*$ is defined through the notions of conjugate variables and free Fisher information.
This leads to a non-microstates free entropy dimension $\delta^*$, and an analogous definition of $\alpha$-boundedness for $\delta^*$.
It is a deep result of Biane, Capitaine, and Guionnet \cite{BCG03} that $\chi^*(X_1,\dots,X_n) \geq \chi(X_1,\dots,X_n)$ (and so also larger than the relative microstates free entropy).
Consequently, $\alpha$-boundedness for $\delta^*$ implies $\alpha$-boundedness for $\delta_0$.
\end{rmk}

In the remainder of this section, let us introduce some terminology necessary to state a result of Jung \cite{J16} reproved by Shlyakhtenko \cite{S16}.

Let $T_1,\dots,T_n$ be formal noncommuting indeterminates, and write $\C\langle T_1,\dots,T_n\rangle$ for their unital algebra of noncommutative polynomials.
For each $1\leq i\leq n$, define a map
\begin{align*}
\partial_i\colon\C\langle T_1,\dots,T_n\rangle\rightarrow\C\langle T_1,\dots,T_n\rangle\ox\C\langle T_1,\dots,T_n\rangle ,
\end{align*}
by the relations
\begin{align*}
\partial_i T_j &= \delta_{ij}(1\ox 1),&
\partial_i(P_1 P_2) = (\partial_i P_1)(1\ox P_2) + (P_1\ox 1)(\partial_i P_2) ,
\end{align*}
where $P_1,P_2\in\C\langle T_1,\dots,T_n\rangle$.
When we equip $\C\langle T_1,\dots,T_n\rangle^{\ox 2}$ with the $\C\langle T_1,\dots,T_n\rangle$-bimodule structure $P_1\cdot(P_2\ox P_3)\cdot P_4 = (P_1 P_2 \ox P_3 P_4)$, the $\partial_i$ become derivations.

For a vector of such polynomials $P = (P_1,\dots,P_m)\in\C\langle T_1,\dots,T_n\rangle^m$, we define
\begin{align*}
\partial P = \sum_{i=1}^n \sum_{j=1}^m (\partial_i P_j)\ox e_j\ox e_i^* \in \C\langle T_1,\dots,T_n\rangle^{\ox 2}\ox M_{m\times n}(\C) .
\end{align*}
We now want to evaluate such expressions in self-adjoint $X_1,\dots,X_n\in\s{M}$, where $\s{M}$ is still a finite von Neumann algebra with faithful normal tracial state $\tau$.
This results in $\partial P(X_1,\dots,X_n)$, which we view as an element in $\s{M}\ox\s{M}^\mathrm{op}\ox M_{m\times n}(\C)$.
Equip $L^2\s{M}\ox L^2\s{M}^\mathrm{op}$ with the right $\s{M}\ox\s{M}^\mathrm{op}$-module structure $(\xi\ox\eta)\cdot(x\ox y^\mathrm{op}) = (\xi x\ox y^\mathrm{op}\eta)$.
Then $\partial P(X_1,\dots,X_n)$ is a bounded right $\s{M}\ox\s{M}^\mathrm{op}$-module map from $L^2\s{M}\ox L^2\s{M}^\mathrm{op}\ox\C^n$ to $L^2\s{M}\ox L^2\s{M}^\mathrm{op}\ox\C^m$.
Consequently, we can define the \emph{rank} of $\partial P(X_1,\dots,X_n)$, denoted rank$(\partial F(X_1,\dots,X_n))$, as the Murray--von Neumann dimension of the closure of its image.

Finally, recall that when $A\in M_n(\C)$ is strictly positive we have the identity
\begin{align*}
\det(A) = \exp(\Tr(\log(A))) .
\end{align*}
This motivates the definition of the \emph{Fuglede--Kadison--L\"{u}ck determinant} $\det_{FKL}$ on $(\s{M},\tau)$.
Let $x\in\s{M}$, and write $\mu_{\abs{x}}$ for the spectral distribution of $\abs{x}$ with respect to $\tau$.
Then
\begin{align*}
\sideset{}{_{FKL}}\det(x) = \exp\left( \int_{0^+}^\infty \log(s)\mathrm{d}\mu_{\abs{x}}(s) \right) ,
\end{align*}
when the integral is finite, and zero else.
We say that $x$ is of \emph{determinant class} (with respect to $\tau$) if $\det_{FKL}(x) \neq 0$.

\begin{thm}[{\cite[Theorem 6.9]{J16}} and {\cite[Theorem 2.5]{S16}}]\label{thm:CF1BD}
Let $\s{M}$ be a finite von Neumann algebra with faithful normal tracial state $\tau$, and $X_1,\dots,X_n\in\s{M}$ self-adjoint.
Assume that there is a vector $F\in\C\langle T_1,\dots,T_n\rangle^m$ such that \begin{align*}
F(X_1,\dots,X_n) = 0 ~ \text{and} ~  \sideset{}{_{FKL}}\det\left[\partial F(X_1,\dots,X_n)^*\partial F(X_1,\dots,X_n)\right] \neq 0 .
\end{align*}
Then it holds that $X_1,\dots,X_n$ are $\alpha$-bounded (for both $\delta_0$ and $\delta^*$) with
\begin{align*}
\alpha = n - \mathrm{rank}\left(\partial F(X_1,\dots,X_n)\right) .
\end{align*}
\end{thm}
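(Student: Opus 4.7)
The plan follows the geometric approach underlying Shlyakhtenko's proof. The goal is to bound
\[
\chi(X_1 + \vep S_1,\dots,X_n + \vep S_n : S_1,\dots,S_n) \leq -r\abs{\log\vep} + K
\]
for small $\vep$, where $r = \mathrm{rank}(\partial F(X))$; comparing with Definition \ref{dfn:aBS1B}, this is exactly $\alpha$-boundedness with $\alpha = n - r$.

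The first step is a first-order linearisation. Take a relative microstate: a matrix tuple $A^{(k)}$ such that there exist semicircular matrix microstates $B^{(k)}$ with $(A^{(k)},B^{(k)})$ approximating the joint $\ast$-distribution of $(X+\vep S,S)$ to high polynomial degree in tracial $2$-norm. Since $A^{(k)} - \vep B^{(k)} \approx X$ and $F(X)=0$, the derivation property of $\partial$ yields the Taylor expansion
\[
F(A^{(k)}) = \vep\sum_{i=1}^n \partial_i F(A^{(k)}) \,\#\, B_i^{(k)} + O(\vep^2),
\]
where $(p\ox q)\# s = psq$. Hence $\|F(A^{(k)})\|_2 = O(\vep)$, confining $A^{(k)}$ to a tube of $\vep$-thickness around the matrix variety $\{F=0\}$ along the normal directions prescribed by $\partial F(A^{(k)})$.

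The second step converts this tube into a volume estimate. Viewing $\partial F(X)$ as a right $\s{M}\ox\s{M}^{\mathrm{op}}$-module map, its image has Murray--von Neumann dimension $r$, and the associated finite-dimensional maps $\partial F(X^{(k)})$ have matrix image dimension growing like $r k^2$ asymptotically. The hypothesis $\sideset{}{_{FKL}}\det(\partial F(X)^*\partial F(X)) \neq 0$ is precisely what is needed, via L\"uck-type approximation for the spectral measures of the positive matrices $\partial F(X^{(k)})^*\partial F(X^{(k)})$, to bound uniformly in $k$ the logarithm of the matrix Jacobian of the orthogonal projection onto the variety: these spectral measures do not concentrate on $0$ fast enough to cause super-exponential degeneration. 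This produces
\[
\mathrm{Vol}\bigl(\Gamma(X+\vep S:S;\ell,k,\vep_0)\bigr) \leq e^{Ck^2}\, \vep^{r k^2}\, \mathrm{Vol}(\text{ambient microstate ball}),
\]
and taking logs, dividing by $k^2$, and adding the $\tfrac{n}{2}\log k$ normalisation in the definition of $\chi$ gives the claimed estimate. For the stronger $\delta^*$ statement, one can alternatively follow Shlyakhtenko's conjugate-variable route, exhibiting explicit conjugate variables for $A_i = X_i + \vep S_i$ of the form $\vep^{-1}S_i$ plus a correction supported on the image of $\partial F(X)$, with the determinant-class hypothesis supplying the needed pseudo-inverse and ensuring finite free Fisher information.

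The main obstacle I expect is step two: passing from the operator-algebraic rank and Fuglede--Kadison--L\"uck determinant to uniform finite-dimensional Jacobian control. Individual eigenvalues of $\partial F(X^{(k)})^*\partial F(X^{(k)})$ can be arbitrarily small for finite $k$, and one needs their logarithms to aggregate into the bounded constant $\log\sideset{}{_{FKL}}\det(\partial F(X)^*\partial F(X))$ rather than driving the integral to $-\infty$. This is exactly the function the non-vanishing determinant hypothesis serves, and turning it into a uniform matrix-level volume bound is the technical heart of both the Jung and Shlyakhtenko proofs.
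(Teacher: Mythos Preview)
The paper does not supply its own proof of this theorem: it is stated as a citation of \cite[Theorem~6.9]{J16} and \cite[Theorem~2.5]{S16} and used as a black box in the proof of Corollary~\ref{cor:1BDNS}. So there is no in-paper argument to compare your proposal against.

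That said, your sketch is a reasonable high-level summary of the cited proofs, particularly Shlyakhtenko's. You correctly identify the two main ingredients --- the first-order Taylor expansion that confines microstates to an $\vep$-tube around the zero locus of $F$, and the conversion of the determinant-class hypothesis into a uniform Jacobian/volume bound via L\"uck-type approximation --- and you are right that the second step is the technical crux. What you have written is a plan rather than a proof: the passage from ``spectral measures do not concentrate on $0$ fast enough'' to the displayed volume inequality hides essentially all of the work, and the $\delta^*$ paragraph is only a gesture. If you intend to actually prove the theorem rather than cite it, you would need to make precise the L\"uck approximation step (uniform integrability of $\log$ against the spectral measures of the finite-dimensional approximants) and the covering argument that produces the $\vep^{rk^2}$ factor; both references carry this out in detail.
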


\section{Generators, Relations, and $1$-Boundedness}\label{sec:GR1B}

\subsection{Generators}\label{ssec:Gen}

We now fix $Q = J_{2N}$ and consider $\bb{F}O^J_{2N} = \bb{F}O(J_{2N})$.
Recall the $2N\times 2N$ matrix of canonical generators $u$.
Let us split $u$ up into four $N\times N$ pieces as
\begin{align*}
u = \begin{pmatrix} u_{(1)} & u_{(2)} \\ u_{(3)} & u_{(4)} \end{pmatrix} .
\end{align*}
Writing out the last relation in the definition \eqref{eq:WCSA} of $C^*\bb{F}O^J_{2N}$, one obtains
\begin{align*}
\begin{pmatrix} u_{(1)} & u_{(2)} \\ u_{(3)} & u_{(4)} \end{pmatrix} = \begin{pmatrix} \ol{u_{(4)}} & - \ol{u_{(3)}} \\ -\ol{u_{(2)}} & \ol{u_{(1)}} \end{pmatrix} .
\end{align*}
Therefore, $u$ must be of the form
\begin{align}
u = \begin{pmatrix} A^u + i C^u & B^u + i D^u \\ - B^u + i D^u & A^u - i C^u \end{pmatrix} , \label{eq:uitopm}
\end{align}
where $A^u,\dots,D^u$ are $N\times N$ matrices of self-adjoint operators (consisting of real and imaginary parts of the canonical generators) from $C^*\bb{F}O^J_{2N}$.
Thus $\ol{A^u} = A^u$, and so on, and we write $(A^u)_{ij} = a^u_{ij}$ ($1\leq i,j\leq N$), and so on.
The reasons for this slightly clunky notation will become clear in the next section.
We use the convention that the alphabetical indices $i,j,k,\cdots$ run from $1$ to $N$, and Greek indices from the beginning of the alphabet (e.g., $\alpha,\beta,\gamma,\dots$) run over $\{a,b,c,d\}$.
Motivated by the above, we will usually interpret $M_{2N}(\C) \cong M_2(\C)\ox M_N(\C)$.

The above form \eqref{eq:uitopm} for $u$ can be nicely expressed in terms of the matrices
\begin{align*}
\tau_a &= I_2 ,&
\tau_b &= i\sigma_y ,&
\tau_c &= i\sigma_z ,&
\tau_d = i\sigma_x ,
\end{align*}
where $\sigma_{x,y,z}$ are the Pauli matrices
\begin{align*}
\sigma_x &= \begin{pmatrix} 0 & 1 \\ 1 & 0 \end{pmatrix} ,&
\sigma_y &= \begin{pmatrix} 0 & -i \\ i & 0 \end{pmatrix} ,&
\sigma_z &= \begin{pmatrix} 1 & 0 \\ 0 & -1 \end{pmatrix} .
\end{align*}
Namely,
\begin{align}
u = \tau_a A^u + \tau_b B^u + \tau_c C^u + \tau_d D^u = \sum_{ij}^\alpha \left( \tau_\alpha\ox E_{ij}\ox\alpha^u_{ij} \right) = \sum_{ij}^\alpha \left( E_{ij}^\alpha \ox\alpha^u_{ij} \right) . \label{eq:nefu}
\end{align}
Here, we have suppressed the tensor products in the first equality (an abuse of notation we will keep committing), used the standard matrix units $E_{ij} \in M_N(\C)$ in the second, and defined $E_{ij}^\alpha = \tau_\alpha\ox E_{ij}$ in the last.
Thus we are using the $E_{ij}^\alpha$ as our basis for $M_{2N}(\C)$.
Notice that in this form
\begin{align}
u^* &= \tau_a (A^u)^t - \tau_b (B^u)^t - \tau_c (C^u)^t - \tau_d (D^u)^t  \nonumber \\
&= \sum_{ij} \left( E_{ij}^a \ox a^u_{ji} - E_{ij}^b \ox b^u_{ji}- E_{ij}^c \ox c^u_{ji}- E_{ij}^d \ox d^u_{ji} \right) . \label{eq:nefus}
\end{align}

\begin{rmk}
As an aside, it already follows from the proof of Theorem 5.1 in \cite{BCV17} that $\delta_0$ and $\delta^*$ of this set of generators is 1 (but $1$-boundedness is of course slightly stronger than this).
To see this, note that the inequality (13) above the aforementioned theorem collapses due to the vanishing of the $L^2$-Betti numbers of $\bb{F}O^J_{2N}$ \cite{B13}.
To obtain Connes embeddability of $\s{L}\bb{F}O^J_{2N}$, notice that it lies inside the graded twist $\s{L}\bb{F}O_{2N}\rtimes \bb{Z}_2$ (where $\bb{Z}_2$ acts on $u$ by conjugating with $J_{2N}$), which is in $\s{L}\bb{F}O_{2N}\ox M_2(\C)$ obtained by the crossed product by the dual action.
This last algebra is Connes embeddable because $\s{L}\bb{F}O_{2N}$ is.
\end{rmk}

\subsection{Relations}\label{ssec:Rel}

In this section we compute the free derivatives of the defining relations with respect to the generators fixed in the previous section.
Let $F = (F^{(1)},F^{(2)})$ be the vector containing the defining relations \eqref{eq:WCSA}, in the form $F(u) = 0$.
So $F^{(1)}(u) = u^* u - I_{2N}$ and $F^{(2)}(u) = uu^* - I_{2N}$.
Here, $F(u)$ is shorthand for $F(u_{11},\dots,u_{2N,2N})$, and similar notation will be used throughout the remainder of the paper.

Let $a_{ij},\dots,d_{ij}$, $1\leq i,j\leq N$, be $(2N)^2$ self-adjoint noncommuting formal indeterminates, and set $\s{C} = \C\langle a_{11},\dots,d_{NN} \rangle$.
When we evaluate in the actual operators, $c_{k\ell}$ will for instance correspond to $c_{k\ell}^u$.
Accordingly, collect the formal indeterminates into matrices $A = \sum_{ij} a_{ij}\ox E_{ij}$ and so on.
Thus we view $F\in\s{C}\ox\left(M_{2N}(\C)\oplus M_{2N}(\C)\right)$, where we consider $M_{2N}(\C)$ to just be a linear space.
Keeping in mind Equations \eqref{eq:nefu} and \eqref{eq:nefus}, we get the explicit polyonomials
\begin{align*}
F^{(1)} &= \left( A^t\tau_a - B^t\tau_b - C^t\tau_c - D^t\tau_d \right)\left( A\tau_a + B\tau_b + C\tau_c + D\tau_d \right) - I_{2N} , \\
F^{(2)} &= \left( A\tau_a + B\tau_b + C\tau_c + D\tau_d \right)\left( A^t\tau_a - B^t\tau_b - C^t\tau_c - D^t\tau_d \right) - I_{2N} .
\end{align*}
When evaluating, we will take the generators $a^u_{ij},\dots,d^u_{ij}$ in their `reduced' form acting on $H$.
This is due to the fact that we want to investigate properties of the von Neumann algebra $\s{L}\bb{F}O^J_{2N}$, which is represented on $H$, the GNS space of $C^*\bb{F}O^J_{2N}$ coming from the Haar state.

Our goal in this section is to determine
\begin{align*}
\partial F(A^u,B^u,C^u,D^u)\in B(H)\ox B(H)\ox B(M_{2N}(\C) ; M_{2N}(\C)\oplus M_{2N}(\C)),
\end{align*}
and express it in terms of the quantum group theoretic data coming from $\bb{F}O^J_{2N}$.
The result is stated in the lemma below, whose proof constitutes one of the main technical components of this article and should be viewed as analogous to \cite[Lemma 4.2]{BV18}.
Recall from Section \ref{ssec:CoRep} that there is a copy $M_{2N}(\C)\cong p_1 H$.
This identification will be important for the next lemma.

\begin{lem}\label{lem:DFCTERO}
On $H\ox H\ox p_1 H$ it holds that
\begin{align*}
\partial F^{(1)}(A^u,B^u,C^u,D^u)^* \partial F^{(1)}(A^u,B^u,C^u,D^u) = 2 + 2 \fr{Re}[W] ,
\end{align*}
where $W = V_{31}(1\ox U\ox U) V_{32}(1\ox U\ox 1)$.
The same relation is true for $F^{(2)}$.
\end{lem}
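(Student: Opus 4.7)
The plan is to apply the Leibniz rule to $F^{(1)} = u^*u - I_{2N}$, expand the product $\partial F^{(1)*}\partial F^{(1)}$ into four pieces, and translate the resulting matrix-valued combinatorics back into the quantum group theoretic operators $V$ and $U$. Setting $\epsilon_a = +1$ and $\epsilon_b = \epsilon_c = \epsilon_d = -1$ (reflecting $\tau_\alpha^* = \epsilon_\alpha \tau_\alpha$), one reads off from \eqref{eq:nefu} and \eqref{eq:nefus} that
\begin{align*}
\partial_{\alpha_{k\ell}} u = E^\alpha_{k\ell} \otimes (1 \otimes 1), \qquad \partial_{\alpha_{k\ell}} u^* = \epsilon_\alpha\, E^\alpha_{\ell k} \otimes (1 \otimes 1),
\end{align*}
as elements of $M_{2N}(\C) \otimes \s{C}^{\otimes 2}$. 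Applying the product rule to $u^*u$ and evaluating in the canonical generators, and then identifying the generator index set $\{(\alpha, k, \ell)\}$ with the basis $\{E^\alpha_{k\ell}\}$ of $M_{2N}(\C) \cong p_1 H$, one assembles $\partial F^{(1)}(A^u,B^u,C^u,D^u)$ as a bounded right module map $H \otimes H \otimes p_1 H \to H \otimes H \otimes M_{2N}(\C)$ which splits as a ``right-$u$'' summand (from $(\partial u^*)(1 \otimes u)$) plus a ``left-$u^*$'' summand (from $(u^* \otimes 1)(\partial u)$).

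Then $\partial F^{(1)*}\partial F^{(1)}$ unfolds into four contributions. The two \emph{diagonal} terms, where each summand is paired with itself, contract the auxiliary $M_{2N}$-index through $u^*u = I_{2N}$ (respectively $uu^* = I_{2N}$), collapsing each to the identity on $H \otimes H \otimes p_1 H$ and producing the constant $2$ in the statement. The two remaining \emph{cross} terms are mutually adjoint, so their sum is $W + W^* = 2\fr{Re}[W]$ and it suffices to identify one of them with $W$. To do this, translate left multiplication by the matrix $u$ on the $p_1$ block into the action of the multiplicative unitary: the component of $V$ at the central projection $p_1 \in M(c_0(Q))$ is precisely the fundamental representation $u$ acting on $p_1 H \otimes H$, and since $\bb{F}O^J_{2N}$ is unimodular, the antipode is implemented by $U$ with $(\iota \otimes S)(u) = u^*$, so left multiplication by $u^*$ acquires exactly the two factors of $U$ appearing in $W$.

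The main obstacle is this last identification: tracking simultaneously the signs $\epsilon_\alpha$, the index transposition $(k,\ell) \leftrightarrow (\ell,k)$ distinguishing $\partial u$ from $\partial u^*$, and the leg numbering so that the cross term emerges in the compact form $V_{31}(1 \otimes U \otimes U)V_{32}(1 \otimes U \otimes 1)$. The signs $\epsilon_\alpha$ are precisely what $U$ produces under restriction to $p_1 H$: since $U$ acts on $M_{2N}(\C) \cong p_1 H$ by a transformation involving $J_{2N}$ which flips the sign of the $\tau_{b,c,d}$ components of the Pauli decomposition, these signs are absorbed cleanly into the $U$-factors of $W$. The case $F^{(2)} = uu^* - I_{2N}$ follows from the same computation with $u$ and $u^*$ interchanged, which swaps the roles of the two cross terms and leaves the final expression $2 + 2\fr{Re}[W]$ unchanged.
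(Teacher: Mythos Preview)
Your high-level strategy is correct for $F^{(1)}$ and is a genuinely cleaner alternative to the paper's proof. Where the paper expands $F^{(1)}$ into its four Pauli components, differentiates each monomial by hand, and then painstakingly reassembles the resulting rank-one operators $\vth_{\alpha,\beta}$ into products of $\lambda$, $T$, and $\Gamma$ (several pages of explicit formulas), you go straight to the matrix-level Leibniz decomposition $\partial(u^*u) = (\partial u^*)(1\ox u) + (u^*\ox 1)(\partial u)$ and then recognise each summand as a unitary built from $V$ and $U$. Concretely, after evaluation one has $(u^*\ox 1)(\partial u) = V_{31}^*$ (left action of $u^*$ on the first $H$-leg and left multiplication on $p_1 H$) and $(\partial u^*)(1\ox u) = (1\ox U\ox U)V_{32}(1\ox U\ox 1)$ (right action on the second $H$-leg, with the sign/transpose from $\partial u^*$ becoming the $U$ on the third leg). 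The diagonal terms are then $1$ by unitarity of $V$ (equivalently $u^*u = uu^* = I_{2N}$) and the cross term is exactly $W$. Your identification of $U|_{p_1 H}$ with the adjoint map, absorbing the signs $\epsilon_\alpha$, is the same as the paper's $U = T\ox\Gamma$ observation, just phrased more compactly.

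There is, however, a real gap in your treatment of $F^{(2)}$. Interchanging $u$ and $u^*$ in the Leibniz expansion does \emph{not} simply swap the two cross terms into one another's adjoints. Carrying out the analogous computation yields $\partial F^{(2)}(A^u,\dots,D^u) = (1\ox U\ox U)V_{32}^*(1\ox U\ox U) + V_{31}(1\ox 1\ox U)$, and the resulting cross term is $(1\ox U\ox U)V_{32}(1\ox U\ox U)V_{31}(1\ox 1\ox U)$, which is not syntactically $W$ or $W^*$. To reduce it to $W$ one must commute $V_{31}$ past the block $(1\ox U\ox U)V_{32}(1\ox U\ox U)$; these share only the third tensor leg, where the former lies in $c_0(\bb{F}O^J_{2N})$ and the latter in $Uc_0(\bb{F}O^J_{2N})U$, and these two subalgebras commute. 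This is a genuine structural input (the paper invokes the slice-map description of $c_0$ to justify it) and should be stated explicitly rather than absorbed into ``the same computation with $u$ and $u^*$ interchanged''.
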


\begin{proof}
Since we are going to take free derivatives of $F^{(1)}$ and $F^{(2)}$, we can ignore the $I_{2N}$ terms.
Let us first focus on $F^{(2)}$, which can be written out using the algebraic relations of the $\tau$'s to read
\begin{align*}
F^{(2)} = F^{(2)}_a \tau_a - F^{(2)}_b \tau_b - F^{(2)}_c \tau_c - F^{(2)}_d \tau_d ,
\end{align*}
with
\begin{align*}
F^{(2)}_a &= AA^t + BB^t + CC^t + DD^t , &
F^{(2)}_b &= AB^t + DC^t - BA^t - CD^t , \\
F^{(2)}_c &= AC^t + BD^t - CA^t - DB^t , &
F^{(2)}_d &= AD^t + CB^t - DA^t - BC^t .
\end{align*}
Now, by definition $\partial F^{(2)}$ is the map such that
\begin{align*}
\partial F^{(2)} (E_{ij}^\alpha) = \sum_{k\ell}^\beta \partial_{ij}^\alpha \left( F^{(2)} \right)_{k\ell}^\beta .
\end{align*}
Here, $\partial^a_{ij}$ for instance refers to taking the free partial derivative with respect to $a_{ij}$.
By linearity of $\partial$, we can compute the free derivatives of the four pieces $F^{(2)}_{a,b,c,d}$ separately.

We perform the computation for $F^{(2)}_a$ in detail, the others are similar.
By definition
\begin{align*}
\partial_{ij}^\alpha \left( F^{(2)}_a\tau_a \right)_{k\ell}^\beta &= \delta_{a\beta} \partial_{ij}^\alpha \biggl[ \biggl( \sum_m^\gamma \gamma_{km}\gamma_{\ell m} \biggr) \ox E_{k\ell}^a \biggr] \\
&= \delta_{a\beta} \left( \sum_m \left[ \delta_{ik}\delta_{jm} (1\ox\alpha_{\ell m}) + \delta_{i\ell}\delta_{jm} (\alpha_{km}\ox 1) \right] \right)\ox E_{k\ell}^a \\
&= \delta_{a\beta} \left[ \delta_{ik} (1\ox\alpha_{\ell j}) + \delta_{i\ell} (\alpha_{kj}\ox 1) \right]\ox E_{k\ell}^a .
\end{align*}
So that
\begin{align*}
\left[ \partial \left( F^{(2)}_a\tau_a \right) \right]\left( E_{ij}^\alpha \right) &= \sum_\ell \left( 1\ox\alpha_{\ell j}\ox E_{i\ell}^a \right) + \sum_k \left( \alpha_{kj}\ox 1\ox E_{ki}^a \right) .
\end{align*}
Now notice that
\begin{align*}
E_{i\ell}^a &= (T\lambda_{\ell j}T\ox \vth_{a,\alpha}) E_{ij}^\alpha ,&
E_{ki}^a &= \left( \lambda_{kj}T\ox \vth_{a,\alpha} \right) E_{ij}^\alpha ,
\end{align*}
where $T$ and $\lambda_{ij}$ are the transpose map and left multiplication by $E_{ij}$ respectively, acting on $M_N(\C)$, and $\vth_{\alpha,\beta}$ is the rank one operator on $M_2(\C)$ that sends $\tau_\beta$ to $\tau_\alpha$.
Thus
\begin{align*}
\partial\left( F^{(2)}_a\tau_a \right) = \sum_{ij}^\alpha \left( 1\ox\alpha_{\ell j}\ox T\lambda_{ij}T\ox \vth_{a,\alpha} \right) + \sum_{k\ell}^\beta \left( \beta_{k\ell}\ox 1\ox \lambda_{k\ell}T\ox \vth_{a,\beta} \right)
\end{align*}
Analogously one finds that
\begin{align*}
\partial\left( F^{(2)}_b\tau_b \right) = &+ \sum_{ij} \left( 1\ox b_{ij}\ox T\lambda_{ij}T\ox \vth_{b,a} \right) - \sum_{k\ell} \left( b_{k\ell}\ox 1\ox \lambda_{k\ell}T\ox \vth_{b,a} \right) \\
&- \sum_{ij} \left( 1\ox a_{ij}\ox T\lambda_{ij}T\ox \vth_{b,b} \right) + \sum_{k\ell} \left( a_{k\ell}\ox 1\ox \lambda_{k\ell}T\ox \vth_{b,b} \right) \\
&- \sum_{ij} \left( 1\ox d_{ij}\ox T\lambda_{ij}T\ox \vth_{b,c} \right) + \sum_{k\ell} \left( d_{k\ell}\ox 1\ox \lambda_{k\ell}T\ox \vth_{b,c} \right) \\
&+ \sum_{ij} \left( 1\ox c_{ij}\ox T\lambda_{ij}T\ox \vth_{b,d} \right) - \sum_{k\ell} \left( c_{k\ell}\ox 1\ox \lambda_{k\ell}T\ox \vth_{b,d} \right) ,
\end{align*}
\begin{align*}
\partial\left( F^{(2)}_c\tau_c \right) = &+ \sum_{ij} \left( 1\ox c_{ij}\ox T\lambda_{ij}T\ox \vth_{c,a} \right) - \sum_{k\ell} \left( c_{k\ell}\ox 1\ox \lambda_{k\ell}T\ox \vth_{c,a} \right) \\
&+ \sum_{ij} \left( 1\ox d_{ij}\ox T\lambda_{ij}T\ox \vth_{c,b} \right) - \sum_{k\ell} \left( d_{k\ell}\ox 1\ox \lambda_{k\ell}T\ox \vth_{c,b} \right) \\
&- \sum_{ij} \left( 1\ox a_{ij}\ox T\lambda_{ij}T\ox \vth_{c,c} \right) + \sum_{k\ell} \left( a_{k\ell}\ox 1\ox \lambda_{k\ell}T\ox \vth_{c,c} \right) \\
&- \sum_{ij} \left( 1\ox b_{ij}\ox T\lambda_{ij}T\ox \vth_{c,d} \right) + \sum_{k\ell} \left( b_{k\ell}\ox 1\ox \lambda_{k\ell}T\ox \vth_{c,d} \right) ,
\end{align*}
\begin{align*}
\partial\left( F^{(2)}_d\tau_d \right) = &+ \sum_{ij} \left( 1\ox d_{ij}\ox T\lambda_{ij}T\ox \vth_{d,a} \right) - \sum_{k\ell} \left( d_{k\ell}\ox 1\ox \lambda_{k\ell}T\ox \vth_{d,a} \right) \\
&- \sum_{ij} \left( 1\ox c_{ij}\ox T\lambda_{ij}T\ox \vth_{d,b} \right) + \sum_{k\ell} \left( c_{k\ell}\ox 1\ox \lambda_{k\ell}T\ox \vth_{d,b} \right) \\
&+ \sum_{ij} \left( 1\ox b_{ij}\ox T\lambda_{ij}T\ox \vth_{d,c} \right) - \sum_{k\ell} \left( b_{k\ell}\ox 1\ox \lambda_{k\ell}T\ox \vth_{d,c} \right) \\
&- \sum_{ij} \left( 1\ox a_{ij}\ox T\lambda_{ij}T\ox \vth_{d,d} \right) + \sum_{k\ell} \left( a_{k\ell}\ox 1\ox \lambda_{k\ell}T\ox \vth_{d,d} \right) .
\end{align*}

The next step is to rewrite the rank one operators $\vth_{\alpha,\beta}$ in the right way.
Let us investigate what the action of the antipode $S$ looks like in terms of the self-adjoint generators from Section \ref{ssec:Gen}.
A quick computation yields
\begin{align*}
S(a^u_{ij}) &= a^u_{ji} ,& S(b^u_{ij}) &= - b^u_{ji} ,& S(c^u_{ij}) &= - c^u_{ji} ,& S(d^u_{ij}) = - d^u_{ji} .
\end{align*}
Compare this with
\begin{align*}
(E_{ij}^a)^* &= E_{ji}^a ,& (E_{ij}^b)^* &= - E_{ji}^b ,& (E_{ij}^c)^* &= - E_{ji}^c ,& (E_{ij}^d)^* = - E_{ji}^d .
\end{align*}
Thus write $\Gamma$ for the linear extension of the map $\Gamma\tau_a = \tau_a$, $\Gamma\tau_{b,c,d} = -\tau_{b,c,d}$ on $M_2(\C)$.
Recall the operator $U$ from Section \ref{ssec:FOQG}, which was induced by the unitary antipode $R$.
As we are in the unimodular case, $R$ is the same as $S$.
Hence we can decompose $U = (T\ox \Gamma)$ on $p_1 H\cong M_{2N}(\C)\cong M_2(\C)\ox M_N(\C)$ when we evaluate in $a^u_{ij},\dots,d^u_{ij}$.

We have already written the $M_N(\C)$ leg of $\partial F^{(2)}$ in terms of multiplication operators and transposes, so this suggests that we should find expressions for $\vth_{\alpha,\beta}$ in terms of $\lambda_{a,b,c,d}$ (left multiplication by $\tau_{a,b,c,d}$), $\Gamma$, and $P_{a,b,c,d}$ which are the projections onto $\tau_{a,b,c,d}$ in $M_2(\C)$.
For example, $\vth_{d,b} = \Gamma\lambda_c\Gamma P_b = -\lambda_c\Gamma P_b$.

With this the above relations become
\begin{align*}
\partial\left( F^{(2)}_a\tau_a \right) = &+ \sum_{ij} \left( 1\ox a_{ij}\ox T\lambda_{ij}T\ox \Gamma\lambda_a \Gamma P_a \right) + \sum_{k\ell} \left( a_{k\ell}\ox 1\ox\lambda_{k\ell}T\ox \lambda_a \Gamma P_a \right) \\
&+ \sum_{ij} \left( 1\ox b_{ij}\ox T\lambda_{ij}T\ox \Gamma\lambda_b \Gamma P_b \right) + \sum_{k\ell} \left( b_{k\ell}\ox 1\ox\lambda_{k\ell}T\ox \lambda_b \Gamma P_b \right) \\
&+ \sum_{ij} \left( 1\ox c_{ij}\ox T\lambda_{ij}T\ox \Gamma\lambda_c \Gamma P_c \right) + \sum_{k\ell} \left( c_{k\ell}\ox 1\ox\lambda_{k\ell}T\ox \lambda_c \Gamma P_c \right) \\
&+ \sum_{ij} \left( 1\ox d_{ij}\ox T\lambda_{ij}T\ox \Gamma\lambda_d \Gamma P_d \right) + \sum_{k\ell} \left( d_{k\ell}\ox 1\ox\lambda_{k\ell}T\ox \lambda_d \Gamma P_d \right) ,
\end{align*}
\begin{align*}
\partial\left( F^{(2)}_b\tau_b \right) = &- \sum_{ij} \left( 1\ox b_{ij}\ox T\lambda_{ij}T\ox \Gamma\lambda_b \Gamma P_a \right) - \sum_{k\ell} \left( b_{k\ell}\ox 1\ox \lambda_{k\ell}T\ox \lambda_b \Gamma P_a \right) \\
&- \sum_{ij} \left( 1\ox a_{ij}\ox T\lambda_{ij}T\ox \Gamma\lambda_a \Gamma P_b \right) - \sum_{k\ell} \left( a_{k\ell}\ox 1\ox \lambda_{k\ell}T\ox \lambda_a \Gamma P_b \right) \\
&- \sum_{ij} \left( 1\ox d_{ij}\ox T\lambda_{ij}T\ox \Gamma\lambda_c \Gamma P_d \right) - \sum_{k\ell} \left( d_{k\ell}\ox 1\ox \lambda_{k\ell}T\ox \lambda_d \Gamma P_c \right) \\
&- \sum_{ij} \left( 1\ox c_{ij}\ox T\lambda_{ij}T\ox \Gamma\lambda_d \Gamma P_c \right) - \sum_{k\ell} \left( c_{k\ell}\ox 1\ox \lambda_{k\ell}T\ox \lambda_c \Gamma P_d \right) ,
\end{align*}
\begin{align*}
\partial\left( F^{(2)}_c\tau_c \right) = &- \sum_{ij} \left( 1\ox c_{ij}\ox T\lambda_{ij}T\ox \Gamma\lambda_c \Gamma P_a \right) - \sum_{k\ell} \left( c_{k\ell}\ox 1\ox \lambda_{k\ell}T\ox \lambda_c \Gamma P_a \right) \\
&- \sum_{ij} \left( 1\ox d_{ij}\ox T\lambda_{ij}T\ox \Gamma\lambda_d \Gamma P_b \right) - \sum_{k\ell} \left( d_{k\ell}\ox 1\ox \lambda_{k\ell}T\ox \lambda_d \Gamma P_b \right) \\
&- \sum_{ij} \left( 1\ox a_{ij}\ox T\lambda_{ij}T\ox \Gamma\lambda_a \Gamma P_c \right) - \sum_{k\ell} \left( a_{k\ell}\ox 1\ox \lambda_{k\ell}T\ox \lambda_a \Gamma P_c \right) \\
&- \sum_{ij} \left( 1\ox b_{ij}\ox T\lambda_{ij}T\ox \Gamma\lambda_b \Gamma P_d \right) - \sum_{k\ell} \left( b_{k\ell}\ox 1\ox \lambda_{k\ell}T\ox \lambda_b \Gamma P_d \right) , \\
\end{align*}
\begin{align*}
\partial\left( F^{(2)}_d\tau_d \right) = &- \sum_{ij} \left( 1\ox d_{ij}\ox T\lambda_{ij}T\ox \Gamma\lambda_d \Gamma P_a \right) - \sum_{k\ell} \left( d_{k\ell}\ox 1\ox \lambda_{k\ell}T\ox \lambda_d \Gamma P_a \right) \\
&- \sum_{ij} \left( 1\ox c_{ij}\ox T\lambda_{ij}T\ox \Gamma\lambda_c \Gamma P_b \right) - \sum_{k\ell} \left( c_{k\ell}\ox 1\ox \lambda_{k\ell}T\ox \lambda_c \Gamma P_b \right) \\
&- \sum_{ij} \left( 1\ox b_{ij}\ox T\lambda_{ij}T\ox \Gamma\lambda_b \Gamma P_c \right) - \sum_{k\ell} \left( b_{k\ell}\ox 1\ox \lambda_{k\ell}T\ox \lambda_b \Gamma P_c \right) \\
&- \sum_{ij} \left( 1\ox a_{ij}\ox T\lambda_{ij}T\ox \Gamma\lambda_a \Gamma P_d \right) - \sum_{k\ell} \left( a_{k\ell}\ox 1\ox \lambda_{k\ell}T\ox \lambda_a \Gamma P_d \right) .
\end{align*}
Since
\begin{align*}
\partial F^{(2)} = \partial\left( F^{(2)}_a \tau_a \right) - \partial\left( F^{(2)}_b \tau_b \right) - \partial\left( F^{(2)}_c \tau_c \right) - \partial\left( F^{(2)}_d \tau_d \right) ,
\end{align*}
we obtain the compact formula
\begin{align*}
\partial F^{(2)} = \sum_{ij}^\alpha \left( 1\ox \alpha_{ij}\ox \left[ \left( T\ox \Gamma \right) \lambda_{ij}^\alpha \left( T\ox \Gamma \right) \right] \right) + \sum_{k\ell}^\beta \left( \beta_{k\ell}\ox 1\ox \left[ \lambda_{k\ell}^\beta \left( T\ox \Gamma \right) \right] \right) ,
\end{align*}
where $\lambda_{ij}^\alpha = \lambda_{ij}\otimes\lambda_\alpha$.

By the same techniques it can be shown that
\begin{align*}
\partial F^{(1)} = &+ \sum_{ij} \left( 1\ox a_{ij}\ox \left[ \left( T\ox \Gamma \right) \lambda_{ji}^a \right] \right) + \sum_{k\ell} \left( a_{k\ell}\ox 1\ox \lambda_{\ell k}^a \right) \\
&- \sum_{ij} \left( 1\ox b_{ij}\ox \left[ \left( T\ox \Gamma \right) \lambda_{ji}^b \right] \right) - \sum_{k\ell} \left( b_{k\ell}\ox 1\ox \lambda_{\ell k}^b \right) \\
&- \sum_{ij} \left( 1\ox c_{ij}\ox \left[ \left( T\ox \Gamma \right) \lambda_{ji}^c \right] \right) - \sum_{k\ell} \left( c_{k\ell}\ox 1\ox \lambda_{\ell k}^c \right) \\
&- \sum_{ij} \left( 1\ox d_{ij}\ox \left[ \left( T\ox \Gamma \right) \lambda_{ji}^d \right] \right) - \sum_{k\ell} \left( d_{k\ell}\ox 1\ox \lambda_{\ell k}^d \right) .
\end{align*}

Now we evaluate the `formal' expressions above in the `actual' operators.
Let us start with $\partial F^{(1)}$.
Note that we are taking $a^u_{ij},\dots,d^u_{ij}$ to act on $H$, i.e.\ as elements of $C^*_r\bb{F}O^J_{2N} \subset \s{L}\bb{F}O^J_{2N}$.
Due to the bimodule structure on $\s{C}$, elements in the first tensor leg act from the left, but in the second leg they act from the right.
It is simple to check that in the unimodular case, the right multiplication $\rho$ on $H$ of $x\in C^*_r\bb{F}O^J_{2N}$ can be written $\rho(x) = US(x)U$.

Keeping in mind the identification of $U$ restricted to $p_1 H$ with $(T\ox\Gamma)$ discussed above,
\begin{align*}
\partial F^{(1)}(A^u,\dots,D^u) = &+ \sum_{ij} \left( 1\ox U\ox U \right) \left( 1\ox a^u_{ji}\ox \lambda_{ji}^a \right) \left( 1\ox U \ox 1 \right) + \sum_{k\ell} \left( a^u_{k\ell}\ox 1\ox \lambda_{\ell k}^a \right) \\
&+ \sum_{ij} \left( 1\ox U\ox U \right) \left( 1\ox b^u_{ji}\ox \lambda_{ji}^b \right) \left( 1\ox U \ox 1 \right) - \sum_{k\ell} \left( b^u_{k\ell}\ox 1\ox \lambda_{\ell k}^b \right) \\
&+ \sum_{ij} \left( 1\ox U\ox U \right) \left( 1\ox c^u_{ji}\ox \lambda_{ji}^c \right) \left( 1\ox U \ox 1 \right) - \sum_{k\ell} \left( c^u_{k\ell}\ox 1\ox \lambda_{\ell k}^c \right) \\
&+ \sum_{ij} \left( 1\ox U\ox U \right) \left( 1\ox d^u_{ji}\ox \lambda_{ji}^d \right) \left( 1\ox U \ox 1 \right) - \sum_{k\ell} \left( d^u_{k\ell}\ox 1\ox \lambda_{\ell k}^d \right) ,
\end{align*}
as an element of $B(H\ox H\ox p_1 H)$.
This can be written more compactly as
\begin{align*}
\partial F^{(1)}(A^u,\dots,D^u) = &+ \left( 1\ox U\ox U \right) \left[ \sum_{ij}^\alpha 1\ox \alpha^u_{ij}\ox \lambda_{ij}^\alpha \right] \left( 1\ox U\ox 1 \right) \\
&+ \sum_{k\ell} \left[ a^u_{k\ell}\ox 1\ox \lambda_{\ell k}^a - b^u_{k\ell}\ox 1\ox \lambda_{\ell k}^b - c^u_{k\ell}\ox 1\ox \lambda_{\ell k}^c - d^u_{k\ell}\ox 1\ox \lambda_{\ell k}^d \right] .
\end{align*}

Notice that due to Equation \eqref{eq:nefu}, left multiplication by $u$ on $(p_1 H)\ox H$ looks like $\sum_{ij}^\alpha(\lambda_{ij}^\alpha \ox\alpha^u_{ij})$.
This is also the restriction of the multiplicative unitary $V$ to $(p_1 H)\ox H$ by the decomposition discussed in Section \ref{ssec:CoRep}.
Thus, using leg numbering notation and recalling also Equation \eqref{eq:nefus} yields
\begin{align*}
\partial F^{(1)}(A^u,\dots,D^u) = \left( 1\ox U\ox U \right) V_{32} \left( 1\ox U\ox 1 \right) + V_{31}^* .
\end{align*}
Similarly
\begin{align*}
\partial F^{(2)}(A^u,\dots,D^u) = \left( 1\ox U\ox U \right) V_{32}^* \left( 1\ox U\ox U \right) + V_{31} \left( 1\ox 1\ox U \right) .
\end{align*}

Setting $W = V_{31}(1\ox U\ox U) V_{32}(1\ox U\ox 1)$, it is now a simple matter to see that
\begin{align*}
\partial F^{(1)}(A^u,B^u,C^u,D^u)^* \partial F^{(1)}(A^u,B^u,C^u,D^u) = 2 + 2\fr{Re}\left[W\right].
\end{align*}
For $F^{(2)}(A^u,\dots,D^u)$ it holds that
\begin{align*}
\partial F^{(2)}(A^u,\dots,D^u)^* \partial F^{(2)}(A^u,\dots,D^u) = 2 + 2 \fr{Re}\left[ (1\ox U\ox U)V_{32} (1\ox U\ox U) V_{31} (1\ox 1\ox U)\right] ,
\end{align*}
which reduces to the desired result upon commuting $V_{31}$ with the terms in front of it.
This is allowed because the two terms only act simultaneously on the third tensor leg, where the terms lie in $Uc_0(\bb{F}O^J_{2N})U$ and $c_0(\bb{F}O^J_{2N})$ respectively, which commute.
One way to check this is to use the fact that $c_0(\bb{F}O^J_{2N})$ can be recovered from $V$ by applying the slice maps $(\iota\ox\vph)(V)$, with $\vph$ coming from the predual of $B(H)$, and taking the closed linear span.
\end{proof}

\subsection{$1$-Boundedness}\label{ssec:1B}

In this section we prove $1$-boundedness of the generator set $a^u_{ij},\dots,d^u_{ij}$.
Given the calculation of $\partial F(A^u,\dots,D^u)$ from the previous section, the rest of the arguments are the same as those for the case $\bb{F}O_M$ covered in \cite{BV18}, but we reproduce some of them here for convenience and completeness.

It remains to determine the rank of $\partial F(A^u,\dots,D^u)$ and to show that it is of determinant class.

\begin{lem}\label{lem:DFR}
$\mathrm{rank} ~ \partial F(A^u,\dots,D^u) = (2N)^2-1$
\end{lem}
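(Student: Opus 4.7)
The strategy follows \cite[Lemma 4.3]{BV18}, taking as its essential input the explicit form of $\partial F^{(j)*}\partial F^{(j)}$ computed in Lemma \ref{lem:DFCTERO}. The quantity to compute is the Murray--von Neumann dimension of the closure of the range of $\partial F(A^u,\dots,D^u)$ viewed as a right $\s{M}\ox\s{M}^{\mathrm{op}}$-module map, where $\s{M} = \s{L}\bb{F}O^J_{2N}$; this equals $n - \dim_{\s{M}\ox\s{M}^{\mathrm{op}}}\ker\partial F$ with $n = (2N)^2$ and $\ker\partial F = \ker\partial F^{(1)}\cap\ker\partial F^{(2)}$.

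First, since $W$ is unitary, the factorisation $2 + 2\fr{Re}[W] = (1+W)(1+W)^*$ identifies $\ker\partial F^{(1)}$ with the $(-1)$-eigenspace of $W = V_{31}(1\ox U\ox U)V_{32}(1\ox U\ox 1)$, and analogously for $F^{(2)}$. Using the pentagon equation $V_{12}V_{13}V_{23} = V_{23}V_{12}$, the involutivity $U^2 = 1$, and $Up_1 = p_1 U$, I would rewrite $W$ as a conjugate of an operator of the form $1 \ox \Theta$ on a suitable reordering of $H\ox H\ox p_1 H$, where $\Theta = \Sigma(1\ox U)V(U\ox U)\Sigma$ is the edge-reversing operator on the quantum Cayley tree. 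This reduces the kernel computation to a question about the $(-1)$-eigenspace of $\Theta$ on $K_Q$.

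Finally, I would invoke the regularity results for $\Theta$ from \cite[Section 3]{BV18}, which apply to all $\bb{F}O(Q)$ with $Q\ol{Q}\in\C I_N$ and in particular to $Q = J_{2N}$. These say that $1+\Theta$ has closed range, and combined with the standard identification of the quantum Cayley complex with the beginning of the bar resolution for $\bb{F}O^J_{2N}$ they let one express the Murray--von Neumann dimension of $\ker\partial F$ as $1 - \beta_0^{(2)}(\bb{F}O^J_{2N}) + \beta_1^{(2)}(\bb{F}O^J_{2N})$. Here $\beta_0^{(2)} = 0$ because $\bb{F}O^J_{2N}$ is an infinite discrete quantum group, and $\beta_1^{(2)} = 0$ is the theorem of Bichon \cite{B13}. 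Consequently $\dim\ker\partial F = 1$ and $\mathrm{rank}\,\partial F = (2N)^2 - 1$. The main delicate point I anticipate is the second step: the extra $U$-twists appearing in $W$ (which are absent for $\bb{F}O_N$) require careful bookkeeping using $U^2 = 1$ and the unimodular identity $R = S$, since $U$ and $V$ do not commute in general, and one must verify that after conjugation the operator really does assemble into $1\ox\Theta$ rather than a genuinely different unitary.
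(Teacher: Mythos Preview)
Your outline reaches the correct conclusion via the right key input (Bichon's $\beta_1^{(2)}(\bb{F}O^J_{2N}) = 0$), but the route differs from what the paper actually invokes. The paper's proof is a one-line appeal to \cite[Lemma 4.1]{BV18}, noting only that the argument there ``goes through unchanged'' once one knows the $L^2$-Betti numbers of $\bb{F}O^J_{2N}$ vanish. For that claim to be literally true, the rank argument in \cite{BV18} must be independent of the explicit form of $\partial F$ computed in Lemma \ref{lem:DFCTERO}---and indeed in \cite{BV18} the rank lemma (their 4.1) \emph{precedes} the derivative computation (their 4.2). The argument identifies $\ker\partial F$ directly with a space of $L^2$-derivations and reads off its Murray--von Neumann dimension as $1 - \beta_0^{(2)} + \beta_1^{(2)}$ from general cohomological principles, without passing through $W$ or $\Theta$ at all.

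Your proposed route---use Lemma \ref{lem:DFCTERO} to obtain $W$, conjugate $W$ to $\Theta\ox 1$, then invoke the bar-resolution identification---can in principle be made to work and lands on the same Betti-number expression, but it duplicates effort: the $W\to\Theta$ conjugation is exactly what the paper carries out in Proposition \ref{prop:DFDC}, where it serves the determinant-class statement rather than the rank. One also has to be careful that the conjugating unitaries used there (involving $\Sigma_{23}$ and $V_{13}$) are not $\s{M}\ox\s{M}^{\mathrm{op}}$-module maps in any obvious sense, so translating the eigenspace dimension back into the rank requires an extra argument you have not supplied. Finally, your anticipated ``delicate point'' about extra $U$-twists peculiar to the $J_{2N}$ case is not actually an issue: the entire content of Lemma \ref{lem:DFCTERO} is that the abstract formula for $W$ in terms of $V$ and $U$ is identical to the one obtained for $\bb{F}O_N$ in \cite[Lemma 4.2]{BV18}, so no $J_{2N}$-specific bookkeeping arises at that stage.
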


\begin{proof}
The proof of Lemma 4.1 of \cite{BV18}, where the rank of this operator for $\bb{F}O_M$ is computed, goes through unchanged, as the $L^2$-Betti numbers of $\bb{F}O^J_{2N}$ were shown to also vanish in \cite[Theorem 6.6]{B13} (but see also \cite[Section 5]{Ve12}).
\end{proof}

\begin{thm}[cf.\ {\cite[Theorem 3.5]{BV18}}]\label{thm:EROIDC}
Let $\Theta = U_1 V_{21}U_1 U_2$ be the edge-reversing operator on the quantum Cayley tree of $\bb{F}O^J_{2N}$.
View $1 + \fr{Re}\left[\Theta\right]$ as an operator in $U\s{L}\bb{F}O^J_{2N}U\ox B(p_1 H)$.
Then it is of determinant class with respect to $h\ox\Tr$.
\end{thm}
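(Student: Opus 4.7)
My plan is to reduce the determinant class statement to a quantitative spectral estimate for the edge-reversing operator, and then appeal to the regularity theorem of Brannan--Vergnioux. First, conjugation by $U\otimes 1$ is a trace preserving $*$-isomorphism $\s{L}\bb{F}O^J_{2N}\ox B(p_1 H)\to U\s{L}\bb{F}O^J_{2N} U\ox B(p_1 H)$, and Fuglede--Kadison--L\"uck determinants are invariant under such conjugations. So without loss of generality I would work with $1+\fr{Re}[\Theta]$ inside $\s{L}\bb{F}O^J_{2N}\ox B(p_1 H)$ equipped with $h\ox \Tr$. Since $\Theta$ is unitary, $\fr{Re}[\Theta]$ is self-adjoint with spectrum in $[-1,1]$ and $1+\fr{Re}[\Theta]\geq 0$. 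Being of determinant class is equivalent to the integrability of $\log(1+t)$ near $t=-1$ against the spectral measure $\mu$ of $\fr{Re}[\Theta]$ induced by $h\ox\Tr$. The goal is therefore a quantitative non-accumulation statement for $\mu$ near $-1$.

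Next I would pass to the corepresentation-theoretic description. Using $p_1 H \cong M_{2N}(\C)$ and the isotypic decomposition $H_Q = \bigoplus_{v\in\mathrm{Irr}(Q)} B(H_v)$, the edge-reversing operator $\Theta=U_1V_{21}U_1 U_2$ block-diagonalises over intertwiner spaces $\mathrm{Hom}(v\ox u,w)$, since its definition only invokes the multiplicative unitary and the unitary antipode. For any $Q$ with $Q\ol{Q}\in\C I_N$ the representation category of $\bb{F}O(Q)$ has the same fusion rules, and the quantum dimensions depend only on $\Tr(Q^*Q)$. For $Q=J_{2N}$ one has $J_{2N}^*J_{2N}=I_{2N}$, so the quantum dimensions match those of $\bb{F}O_{2N}$. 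Hence the blockwise action of $\Theta$ on intertwiner spaces, together with the accompanying trace on the block, is formally identical to the one analysed for $\bb{F}O_{2N}$ in \cite{BV18}.

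Finally I would invoke \cite[Theorem 3.5]{BV18} directly. Their proof produces an explicit polynomial bound on the spectral density of $\fr{Re}[\Theta]$ near $\pm 1$ whose input is precisely the fusion data and quantum dimensions, both of which agree with the $\bb{F}O_{2N}$ case by the previous paragraph. This gives the required $\log$-integrability near $t=-1$, and hence $1+\fr{Re}[\Theta]$ is of determinant class. The step I expect to be most delicate, and the reason this is worth stating as a theorem rather than a citation, is checking that the reduction to \cite{BV18} really only uses data invariant under the monoidal equivalence between $\bb{F}O_{2N}$ and $\bb{F}O^J_{2N}$: concretely one must verify that the identification of $\Theta$-blocks with their counterparts in \cite{BV18} intertwines the respective traces $h\ox\Tr$, so that integrability with respect to one transfers to the other. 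Since both traces decompose as weighted sums over $\mathrm{Irr}(Q)$ with weights determined by the quantum dimensions, and these coincide, this compatibility should hold automatically.
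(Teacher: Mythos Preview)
Your proposal is correct and ultimately rests on the same source as the paper, namely \cite[Theorem~3.5]{BV18}, but the way you get there is slightly different. The paper does not transfer the \emph{result} from $\bb{F}O_{2N}$ to $\bb{F}O^J_{2N}$ via monoidal equivalence; instead it observes that the \emph{proof} of \cite[Theorem~3.5]{BV18} is already written at a level of generality that covers all $\bb{F}O(Q)$ with $Q\ol{Q}\in\C I_M$ and $\mathrm{qdim}(u)>2$, since it only uses the general theory of quantum Cayley graphs from \cite{Ve05,Ve12} together with traciality of the Haar state. Your route through block decomposition and matching of fusion data and quantum dimensions is perfectly sound---and is in effect a concrete unpacking of \emph{why} the Brannan--Vergnioux argument is categorical---but the trace-compatibility check you flag as the ``most delicate'' step becomes unnecessary once one notices that the original proof never needed $Q=I_M$ in the first place. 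The paper's phrasing is shorter; yours makes the mechanism more explicit.
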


\begin{proof}
The proof is the same as the one of Theorem 3.5 in \cite{BV18}.
Although it is stated there only for $\bb{F}O_M$, it is also valid for $\bb{F}O^J_{2N}$.
This is due to the fact that the result only depends on the general theory of quantum Cayley graphs \cite{Ve05,Ve12} valid for all $\bb{F}O(Q)$ with $Q\in\mathrm{GL}_M(\C)$, $M\geq 2$, $Q\ol{Q}\in\C I_M$, and qdim$(u) > 2$ (see the remark at the start of Section 3 in \cite{BV18}), and on the Haar state being a trace.
\end{proof}

\begin{prop}\label{prop:DFDC}
$\partial F(A^u,\dots,D^u)^*\partial F(A^u,\cdots,D^u)$ is of determinant class with respect to $h\ox h\ox \Tr$.
\end{prop}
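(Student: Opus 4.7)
The plan is to deduce the claim from Theorem~\ref{thm:EROIDC} using the computation of Lemma~\ref{lem:DFCTERO}, mirroring the argument of \cite{BV18} for $\bb{F}O_N$.

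First, I would reduce to one of the two summands. Since $F = (F^{(1)}, F^{(2)})$, the map $\partial F$ has the column form $(\partial F^{(1)}, \partial F^{(2)})^t$, so
\begin{align*}
\partial F^*\partial F = \partial F^{(1)*}\partial F^{(1)} + \partial F^{(2)*}\partial F^{(2)} \geq \partial F^{(1)*}\partial F^{(1)} \geq 0.
\end{align*}
Operator monotonicity of $\log$ on the positive cone (so that $A\geq B\geq 0$ implies $\tau(\log A)\geq\tau(\log B)$, with the convention that $\log$ is $-\infty$ on kernels, proved by applying monotonicity to $A+\vep\to A$, $B+\vep\to B$ and taking $\vep\downarrow 0$) applied to the semifinite trace $\tau = h\ox h\ox \Tr$ shows that it is enough to prove that $\partial F^{(1)*}\partial F^{(1)}$ is of determinant class.

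Second, Lemma~\ref{lem:DFCTERO} gives $\partial F^{(1)*}\partial F^{(1)} = 2(1 + \fr{Re}[W])$ with $W = V_{31}(1\ox U\ox U)V_{32}(1\ox U\ox 1)$. The factor $2$ is irrelevant for the determinant-class property, so the task reduces to showing $1 + \fr{Re}[W]$ is of determinant class with respect to $\tau$.

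Third, I would connect $W$ to the edge-reversing operator. Placing $\Theta = U_1 V_{21}U_1 U_2$ on the last two legs of $H\ox H\ox p_1 H$ gives $\Theta_{23} = U_2 V_{32} U_2 U_3$. Using that $U$'s on different legs commute and that $U_3^2 = 1$, a short manipulation yields
\begin{align*}
W = V_{31} U_2 U_3 V_{32} U_2 = V_{31} U_3 (U_2 V_{32} U_2) = V_{31} U_3 \cdot \Theta_{23}\cdot U_3 .
\end{align*}
Conjugating by the trace-preserving unitary $U_3$ reduces the claim to showing that $1 + \fr{Re}[V^\pr_{31}\Theta_{23}]$ is of determinant class, where $V^\pr_{31} := U_3 V_{31} U_3$. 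Finally, exactly as in \cite[Section~4]{BV18}, one uses the pentagon equation $V_{12}V_{13}V_{23} = V_{23}V_{12}$ together with the compatibility of the unitary antipode with $V$ to realise $V^\pr_{31}\Theta_{23}$ as a unitary conjugate of the leg ampliation $1_H \ox \Theta$ to legs $2,3$. For such an ampliation,
\begin{align*}
\sideset{}{_{FKL,\tau}}\det\!\left(1_H \ox (1+\fr{Re}[\Theta])\right) = \sideset{}{_{FKL,h\ox\Tr}}\det\!\left(1+\fr{Re}[\Theta]\right),
\end{align*}
which is nonzero by Theorem~\ref{thm:EROIDC}.

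The main obstacle is this last step: upgrading the clean factorisation $W = V_{31}U_3 \Theta_{23} U_3$ to an honest unitary similarity with an ampliation of $\Theta$. A priori $V^\pr_{31}$ and $\Theta_{23}$ both touch leg~$3$ and do not commute, so one has to relocate the stray unitary to leg~$1$ via pentagon-type manipulations, relying on the corepresentation-theoretic structure of the multiplicative unitary. Once this relocation is carried out, the rest of the argument is mechanical.
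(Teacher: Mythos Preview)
Your outline follows the paper's route, but there are two genuine gaps.

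The first is minor but real. Your monotonicity reduction fails as stated: the Fuglede--Kadison--L\"uck determinant is computed on $(0,\infty)$ only, so with your convention $\log 0=-\infty$ the inequality $\tau(\log A)\geq\tau(\log B)$ becomes vacuous once $B=\partial F^{(1)*}\partial F^{(1)}$ has a nontrivial kernel (which it does, by Lemma~\ref{lem:DFR}). In general $A\geq B\geq 0$ with $B$ of determinant class does \emph{not} force $A$ to be of determinant class --- take $B=0$ and $A$ any positive operator that is not of determinant class. The step is in any case unnecessary: Lemma~\ref{lem:DFCTERO} gives $\partial F^{(1)*}\partial F^{(1)}=\partial F^{(2)*}\partial F^{(2)}=2(1+\fr{Re}[W])$, so $\partial F^*\partial F=4(1+\fr{Re}[W])$ directly.

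The second gap is the substantive one. You correctly arrive at $W=V_{31}U_3\Theta_{23}U_3$ and note that pentagon-type manipulations realise (a conjugate of) $W$ as an ampliation of $\Theta$. But your conclusion ``for such an ampliation, $\det_{FKL,\tau}(1_H\ox(1+\fr{Re}[\Theta]))=\det_{FKL,h\ox\Tr}(1+\fr{Re}[\Theta])$'' silently assumes the conjugating unitary preserves $\tau=h\ox h\ox\Tr$. It does not: the conjugation necessarily involves legs of the multiplicative unitary $V$, which does not lie in the tracial algebra $\s{M}\ox\s{M}^{\mathrm{op}}\ox B(p_1 H)$, and spatial conjugation by it changes the state. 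In the paper's version one finds explicitly that $h\ox h\ox\Tr$ is carried to $(h\ox\Tr\ox h)(V_{23}\,\cdot\,V_{23}^*)$, not to $h\ox\Tr\ox h$. The missing idea is a domination argument: because $\Theta\ox 1$ only sees the finite-dimensional algebra $B(p_1 H)\ox 1$ on the last two legs, the twisted functional $(\Tr\ox h)(V\,\cdot\,V^*)$ is dominated by a scalar multiple of the standard $\Tr\ox h$, and determinant class with respect to a dominated trace follows from determinant class with respect to the dominating one (supplied by Theorem~\ref{thm:EROIDC}). Without this step the passage from ``unitarily conjugate to an ampliation of $\Theta$'' to ``determinant class with respect to $h\ox h\ox\Tr$'' is not justified.
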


\begin{proof}
Write $\tilde{V} = \Sigma(1\ox U)V(1\ox U)\Sigma$ and notice that $W = V_{31}U_2 U_3 V_{32}U_2$.
We will conjugate $W$ by unitaries $\Omega$ as $\Omega^* W\Omega$ to relate it to $\Theta$.
First conjugate by $U_2\Sigma_{23}$ to obtain
\begin{align*}
\Sigma_{23} U_2 V_{31} U_2 U_3 V_{32} U_2 U_2 \Sigma_{23} = U_3 V_{21} U_3 U_2 \Sigma_{23} V_{32} \Sigma_{23} = V_{21} U_2 V_{23} .
\end{align*}
Next, conjugate by $U_1$ to find
\begin{align*}
U_1 V_{21} U_2 V_{23} U_1 = U_1 V_{21} U_1 U_2 V_{23} = \Sigma_{12} U_2 V_{12} U_2 \Sigma_{12} U_2 V_{23} = \tilde{V}_{12} U_2 V_{23} .
\end{align*}
Finally, conjugate by $V_{23}^*V_{13}^*$ to arrive at
\begin{align*}
V_{13} V_{23} \tilde{V}_{12} U_2 V_{23} V_{23}^* V_{13}^* = V_{13}V_{23}\tilde{V}_{12}U_2V_{13}^* .
\end{align*}
Now use the formula $V_{13}V_{23}\tilde{V}_{12} = \tilde{V}_{12}V_{13}$ of Baaj and Skandalis, which can be found in Proposition 6.1 of \cite{BS93}.
Thus
\begin{align*}
V_{13}V_{23}\tilde{V}_{12}U_2V_{13}^* = \tilde{V}_{12}V_{13} U_2 V_{13}^* = \tilde{V}_{12} U_2 .
\end{align*}
Comparing with the definition of $\Theta$, we see that $\tilde{V}_{12}U_2 = \Theta\ox 1$, and we can conclude that $W$ is unitarily conjugate to $\Theta\ox 1$.
On account of Lemma \ref{lem:DFCTERO}, we also have that $\partial F(A^u,\dots,D^u)^*\partial F(A^u,\cdots,D^u)$ is unitarily conjugate to $4(1+\fr{Re}[\Theta\ox 1])$.

We now consider what happens to $h\ox h\ox \Tr$ under this conjugation process.
The Haar state $h$ is implemented as a vector state by $\xi_0\in H$, and $\Tr$ is implement by some finite sum of vector states by finite dimensionality.
Thus, let $\zeta\in p_1 H$ and compute
\begin{align*}
V_{23}^* V_{13}^* U_1 U_2 \Sigma_{23} (\xi_0\ox\xi_0\ox\zeta) = V_{23}^* V_{13}^* (\xi_0\ox\zeta\ox\xi_0) = V_{23}^* (\xi_0\ox\zeta\ox\xi_0) .
\end{align*}
Hence $h\ox h\ox\Tr$ is transformed into $(h\ox\Tr\ox h)(V_{23}\cdot V_{23}^*)$.
Note that the last two legs of $1+\fr{Re}[\Theta\ox 1]$ lie in the finite dimensional algebra $B(p_1 H)\ox 1$.
By finite dimensionality, $(\Tr\ox h)(V\cdot V^*)$ is dominated by some multiple of the standard trace $(\Tr\ox h)$ on this algebra.
Thus we can use Theorem \ref{thm:EROIDC} to conclude that $1+\fr{Re}[\Theta\ox 1]$ is of determinant class with respect to $(h\ox\Tr\ox h)(V_{23}\cdot V_{23}^*)$.
Therefore $\partial F(A^u,\dots,D^u)^*\partial F(A^u,\cdots,D^u)$ is of determinant class with respect to $h\ox h\ox \Tr$, as desired.
\end{proof}

\begin{cor}\label{cor:1BDNS}
The set of self-adjoint generators $a^u_{ij},\dots,d^u_{ij}$ of $\s{L}\bb{F}O^J_{2N}$ is $1$-bounded.
\end{cor}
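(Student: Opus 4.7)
The plan is to assemble this as a direct application of Theorem \ref{thm:CF1BD} using the three preceding results in Section \ref{ssec:1B}. First I would record the counting: the self-adjoint generators $a^u_{ij},b^u_{ij},c^u_{ij},d^u_{ij}$ with $1\leq i,j\leq N$ give a tuple of $n = 4N^2 = (2N)^2$ elements in $\s{L}\bb{F}O^J_{2N}$, all self-adjoint by construction in Section \ref{ssec:Gen}.

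Next I would verify that the polynomial vector $F = (F^{(1)},F^{(2)}) \in \s{C}\ox(M_{2N}(\C)\oplus M_{2N}(\C))$ from Section \ref{ssec:Rel} vanishes when evaluated on the tuple. This is immediate: $F^{(1)}(u) = u^*u - I_{2N}$ and $F^{(2)}(u) = uu^* - I_{2N}$, and $u$ is unitary in $C^*\bb{F}O^J_{2N}$, so in particular in the reduced image $\s{L}\bb{F}O^J_{2N}$. Using the decomposition \eqref{eq:nefu}, $F(A^u,B^u,C^u,D^u) = 0$.

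With the vanishing in hand, Proposition \ref{prop:DFDC} supplies the nondegeneracy hypothesis of Theorem \ref{thm:CF1BD}, namely $\det_{FKL}[\partial F(A^u,\dots,D^u)^*\partial F(A^u,\dots,D^u)] \neq 0$, and Lemma \ref{lem:DFR} gives $\mathrm{rank}(\partial F(A^u,\dots,D^u)) = (2N)^2 - 1$. Theorem \ref{thm:CF1BD} then yields $\alpha$-boundedness for
\begin{align*}
\alpha = n - \mathrm{rank}(\partial F(A^u,\dots,D^u)) = (2N)^2 - \left( (2N)^2 - 1 \right) = 1 ,
\end{align*}
which is exactly the statement of the corollary.

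There is no real obstacle here, as all the heavy lifting has already been done: the identification of $\partial F^*\partial F$ with an edge-reversing operator expression in Lemma \ref{lem:DFCTERO}, the $L^2$-Betti number input for the rank in Lemma \ref{lem:DFR}, and the determinant class property in Proposition \ref{prop:DFDC}. The only thing to be careful about is bookkeeping the generator count so that the subtraction $n - \mathrm{rank}$ produces $1$ rather than some other integer; this is why it was convenient in Section \ref{ssec:Gen} to explicitly package the $4N^2$ real and imaginary parts into the matrices $A^u,B^u,C^u,D^u$.
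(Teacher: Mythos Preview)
Your proposal is correct and follows the same approach as the paper, which simply reads ``Combine Lemma \ref{lem:DFR} and Proposition \ref{prop:DFDC} with Theorem \ref{thm:CF1BD}.'' You have merely spelled out the bookkeeping (the generator count $n=(2N)^2$, the vanishing $F(u)=0$, and the arithmetic $n-\mathrm{rank}=1$) that the paper leaves implicit.
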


\begin{proof}
Combine Lemma \ref{lem:DFR} and Proposition \ref{prop:DFDC} with Theorem \ref{thm:CF1BD}.
\end{proof}

\section{Adding Elements to an $\alpha$-Bounded Set}\label{sec:ADG}

Let $\s{M}$ be a finite von Neumann algebra with faithful normal tracial state $\tau$, and let $X_1,\dots,X_n\in\s{M}$ be self-adjoint.
In this section we prove a lemma that allows us to add certain redundant elements to the set $X_1,\dots,X_n$ while preserving $\alpha$-boundedness.
We achieve this using ideas from Proposition 6.9 in \cite{V2} and its analogue Proposition 6.12 in \cite{V3}.

Let $Y_1,\dots,Y_m$ also be self-adjoint elements in $\s{M}$ such that $Y_1,\dots,Y_m\in W^*(X_1,\dots,X_n)$.
Before stating the lemma, we introduce a distance function that measures how far away the $Y_j$ lie from the von Neumann algebras generated by semicircular perturbations of the $X_i$.
Let $S_1,\dots,S_n$ be a free standard semicircular family, free from the $X_i$, and set
\begin{align*}
d_2(Y_j;X_1,\dots,X_n)(\vep) = \inf\left\{ \norm{Y_j - T}_2 \left| T\in W^*(X_1 + \vep S_1,\dots,X_n + \vep S_n) \right. \right\} .
\end{align*}

\begin{lem}\label{prop:AADG}
Let $\s{M}$ be a finite von Neumann algebra with faithful normal tracial state $\tau$.
Suppose that $X_1,\dots,X_n$ and $Y_1,\dots,Y_m$ are self-adjoint elements such that $Y_1,\dots,Y_m\in W^*(X_1,\dots,X_n)$ \emph{(redundancy)}.
Assume moreover that $\vep^{-1}d_2(Y_j;X_1,\dots,X_n)(\vep)$ is bounded around $\vep = 0$ for all $1\leq j\leq m$ \emph{(regularity)}.
Then if $\{X_1,\dots,X_n\}$ is an $\alpha$-bounded set, so is $\{X_1,\dots,X_n,Y_1,\dots,Y_m\}$.
\end{lem}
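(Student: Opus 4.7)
The plan is to bound the relative microstates free entropy
\begin{align*}
\chi_\vep := \chi\bigl( X_1+\vep S_1, \dots, X_n+\vep S_n, Y_1+\vep S'_1, \dots, Y_m+\vep S'_m : S_1,\dots,S_n, S'_1,\dots,S'_m \bigr),
\end{align*}
where $S'_1,\dots,S'_m$ is an additional free standard semicircular family, free from everything, by $(\alpha-n-m)|\log\vep| + O(1)$ uniformly in small $\vep$. This gives $\alpha$-boundedness of the enlarged set directly from Definition \ref{dfn:aBS1B}. The strategy is to peel off the perturbed $Y_j + \vep S'_j$ one at a time via subadditivity, collapse the relative slot using redundancy, and then use the $d_2$-regularity hypothesis to invoke a Voiculescu-style single-variable estimate.

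First I would apply subadditivity (Proposition \ref{prop:RMFEP}(ii)) iteratively to obtain
\begin{align*}
\chi_\vep \leq \chi(X+\vep S : Y+\vep S', S, S') + \sum_{j=1}^m \chi\bigl( Y_j+\vep S'_j : X+\vep S, \{Y_i+\vep S'_i\}_{i\neq j}, S, S' \bigr).
\end{align*}
For the first summand, property (iii) of Proposition \ref{prop:RMFEP} (applied with $\{S_i\}\subset W^*(Y+\vep S', S, S')$ trivially) yields $\chi(X+\vep S : Y+\vep S', S, S') \leq \chi(X+\vep S : S)$, and the $\alpha$-boundedness of $X_1,\dots,X_n$ bounds this by $(\alpha-n)|\log\vep|+K_0$ for $\vep$ small. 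For each $j$ in the sum, the regularity hypothesis provides $T_j(\vep) \in W^*(X+\vep S)$ with $\|Y_j - T_j(\vep)\|_2 \leq C_j\vep$; since $T_j(\vep)+\vep S'_j$ lies in the von Neumann algebra generated by the relative slot, property (iii) applied again gives
\begin{align*}
\chi\bigl( Y_j+\vep S'_j : X+\vep S, \{Y_i+\vep S'_i\}_{i\neq j}, S, S' \bigr) \leq \chi\bigl( Y_j+\vep S'_j : T_j(\vep)+\vep S'_j \bigr).
\end{align*}

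The key remaining estimate is that since $(Y_j + \vep S'_j) - (T_j(\vep) + \vep S'_j) = Y_j - T_j(\vep)$ has $\|\cdot\|_2$-norm at most $C_j\vep$, conditional microstates for $Y_j + \vep S'_j$ are confined to a normalized Hilbert--Schmidt ball of radius $O(\vep)$ around microstates for $T_j(\vep) + \vep S'_j$. A Voiculescu-type volume estimate (in the spirit of Proposition 6.9 of \cite{V2} and its relative analogue Proposition 6.12 of \cite{V3}) then yields $\chi(Y_j+\vep S'_j : T_j(\vep)+\vep S'_j) \leq \log\vep + K_j$. Summing all contributions gives $\chi_\vep + (n+m-\alpha)|\log\vep| \leq K$ uniformly in small $\vep$, as desired.

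The main obstacle I expect is making the single-variable Voiculescu estimate fully rigorous in this relative $\vep$-dependent setting: the element $T_j(\vep)$ is only approximately a polynomial in $X+\vep S$, and this polynomial depends on $\vep$, so one must verify that the cut-off and polynomial approximation parameters can be chosen so the volume bound on the conditional microstates is uniform as $\vep \to 0$. Once this estimate is in place, everything else is formal bookkeeping with the properties of $\chi$ collected in Proposition \ref{prop:RMFEP}.
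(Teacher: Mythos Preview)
Your overall strategy---split via subadditivity, reduce the $X$-block to the $\alpha$-boundedness hypothesis, and control each $Y_j$-block by a $\log\vep$ term---is exactly the paper's approach. The one genuine gap is the step you yourself flag as the main obstacle: the estimate $\chi(Y_j+\vep S'_j : T_j(\vep)+\vep S'_j) \leq \log\vep + K_j$. You propose to prove it by a microstates volume argument and worry about $\vep$-uniformity of polynomial approximations to $T_j(\vep)$; this concern is unnecessary, and the direct volume route is harder than it needs to be.

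The missing idea is the translation formula for relative microstates free entropy (Proposition~1.11 of \cite{V3}): since $T_j(\vep)+\vep S'_j$ lies in the von Neumann algebra generated by the conditioning variables, one may subtract it and obtain
\[
\chi\bigl(Y_j+\vep S'_j : T_j(\vep)+\vep S'_j\bigr) = \chi\bigl(Y_j - T_j(\vep) : T_j(\vep)+\vep S'_j\bigr) \leq \chi\bigl(Y_j - T_j(\vep)\bigr).
\]
Now the global upper bound in Proposition~\ref{prop:RMFEP}(i) gives $\chi(Y_j - T_j(\vep)) \leq \tfrac{1}{2}\log\bigl(2\pi e\,\|Y_j - T_j(\vep)\|_2^2\bigr) \leq \log\vep + K_j$, using only the regularity hypothesis $\|Y_j - T_j(\vep)\|_2 \leq C_j\vep$. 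No microstates computation, no polynomial cut-off, and no $\vep$-uniformity issue arises: the bound depends on $T_j(\vep)$ only through its $L^2$-distance to $Y_j$. The paper streamlines this slightly by performing the translation \emph{before} the subadditivity split (taking $T_1$ to be the conditional expectation of $Y_1$ onto $W^*(X+\vep S)$, which realises the infimum defining $d_2$), but either order works once you have the translation step.
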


\begin{proof}
Note that it suffices to prove the case $m=1$.
Without loss of generality we can extend $S_1,\dots,S_n$ to a free standard semicircular family $S_1,\dots,S_{n+1}$, still free from the $X_i$.
Recalling Definition \ref{dfn:aBS1B}, we need to show that
\begin{align*}
\limsup_{\vep\rightarrow 0} \left[ \chi\left( X_1 + \vep S_1,\dots,X_n + \vep S_n , Y_1 + \vep S_{n+1} : S_1,\dots, S_{n+1} \right) + (n+1-\alpha)\abs{\log\vep} \right] < \infty .
\end{align*}

Write $T_1$ for the conditional expectation of $Y_1$ onto $W^*(X_1 + \vep S_1,\dots,X_n + \vep S_n)$, then by Proposition 1.11 in \cite{V3} and the redundancy assumption we have
\begin{align*}
\chi ( X_1 &+ \vep S_1,\dots,X_n + \vep S_n , Y_1 + \vep S_{n+1} : S_1,\dots, S_{n+1} ) \\
&= \chi(X_1 + \vep S_1,\dots,X_n + \vep S_n, Y_1 - T_1 + \vep S_{n+1} : S_1,\dots,S_{n+1}) .
\end{align*}
By subadditivity ((ii) of Proposition \ref{prop:RMFEP}), we can split this in half as
\begin{align*}
\chi ( X_1 &+ \vep S_1,\dots,X_n + \vep S_n , Y_1 + \vep S_{n+1} : S_1,\dots, S_{n+1} ) \\
&\leq \chi(X_1 + \vep S_1,\dots,X_n + \vep S_n : Y_1 - T_1 + \vep S_{n+1}, S_1, \dots,S_{n+1}) \\
&~~~+ \chi(Y_1 - T_1 + \vep S_{n+1} : X_1 + \vep S_1 , \dots, X_n + \vep S_n,S_1,\dots,S_{n+1}) .
\end{align*}
Consider the first term on the right hand side.
By (iv) of Proposition \ref{prop:RMFEP},
\begin{align*}
\chi(X_1 &+ \vep S_1,\dots,X_n + \vep S_n : Y_1 - T_1 + \vep S_{n+1},S_1,\dots,S_{n+1}) \\
&= \chi(X_1 + \vep S_1,\dots,X_n + \vep S_n : S_1,\dots,S_{n+1}) ,
\end{align*}
as $Y_1-T_1+\vep S_{n+1} \in W^*(X_1 + \vep S_1,\dots, X_n + \vep S_n,S_1,\dots,S_{n+1})$.
To get rid of the trailing semicircular $S_{n+1}$, note that we may apply (iii) of Proposition \ref{prop:RMFEP}, as $S_1,\dots,S_n\in W^*(S_1,\dots,S_{n+1})$.
So
\begin{align*}
\chi ( X_1 &+ \vep S_1,\dots,X_n + \vep S_n , Y_1 + \vep S_{n+1} : S_1,\dots, S_{n+1} ) \\
&\leq \chi(X_1 + \vep S_1,\dots,X_n + \vep S_n : S_1,\dots,S_n) \\
&~~~+ \chi(Y_1 - T_1 + \vep S_{n+1} : X_1 + \vep S_1 , \dots, X_n + \vep S_n,S_1,\dots,S_{n+1}) .
\end{align*}

Let us now focus on the second term on the right hand side.
By (i) of Proposition \ref{prop:RMFEP}, we may replace the relative microstates free entropy by the ordinary microstates free entropy, as we are only after upper bounds.
So
\begin{align*}
\chi(Y_1 - T_1 + \vep S_{n+1} : X_1 + \vep S_1 , \dots, X_n + \vep S_n,S_1,\dots,S_{n+1}) \leq \chi(Y_1 - T_1 + \vep S_{n+1}) .
\end{align*}
Apply the linear change of variable formula for $\chi$ to it (Proposition 3.6 (b) in \cite{V2}), with transformation `matrix' $\vep$.
This yields
\begin{align*}
\chi(Y_1 - T_1 + \vep S_{n+1}) = \log\vep + \chi\left( \vep^{-1}(Y_1 - T_1) + S_{n+1} \right) .
\end{align*}
Using again (i) of Proposition \ref{prop:RMFEP}, we estimate
\begin{align*}
\chi \bigl( \vep^{-1}(Y_1 - T_1) + S_{n+1}& \bigr) \leq \frac{1}{2}\log \left\{ 2\pi e \, \tau\left[ \left( \vep^{-1}(Y_1 - T_1) + S_{n+1} \right)^2 \right] \right\} .
\end{align*}
Thus, if we can control $\norm{\vep^{-1}(Y_1 - T_1) + S_{n+1}}_2$ uniformly in $\vep$, we obtain a constant upper bound.
For this use the triangle inequality and our regularity assumption to obtain
\begin{align*}
\norm{\vep^{-1}(Y_1 - T_1) + S_{n+1}}_2 \leq \vep^{-1} d_2(Y_1;X_1,\dots,X_n)(\vep) + \norm{S_{n+1}}_2 \leq C^\pr .
\end{align*}
In total we have
\begin{align*}
\chi(X_1 &+ \vep S_1,\dots,X_n + \vep S_n, Y_1 + \vep S_{n+1}  : S_1,\dots,S_{n+1}) \\
&\leq \chi(X_1 + \vep S_1,\dots,X_n + \vep S_n : S_1,\dots,S_n) + \log\vep + C .
\end{align*}

To complete the proof, combine all of the above to get
\begin{align*}
\limsup_{\vep\rightarrow 0} \biggl[ \chi &\bigl( X_1 + \vep S_1,\dots,X_n + \vep S_n , Y_1 + \vep S_{n+1} : S_1,\dots,S_{n+1} \bigr) + (n+1-\alpha)\abs{\log\vep} \biggr] \\
&\leq \limsup_{\vep\rightarrow 0} \biggl[ \chi(X_1 + \vep S_1,\dots,X_n + \vep S_n : S_1,\dots,S_n) + \log\vep + C + (n+1-\alpha)\abs{\log\vep} \biggr] \\
&= C + \limsup_{\vep\rightarrow 0} \biggl[ \chi(X_1 + \vep S_1,\dots,X_n + \vep S_n : S_1,\dots,S_n) + (n-\alpha)\abs{\log\vep} + \left( \log\vep + \abs{\log\vep} \right) \biggr] \\
&= C + \limsup_{\vep\rightarrow 0} \biggl[ \chi(X_1 + \vep S_1,\dots,X_n + \vep S_n : S_1,\dots,S_n) + (n-\alpha)\abs{\log\vep} \biggr] \\
&< \infty ,
\end{align*}
as we assumed that $\{X_1,\dots,X_n\}$ is $\alpha$-bounded.
\end{proof}

\begin{rmk}\label{rmk:ARGFDS}
The ideas used in the proof above can be used show that the result is also true when $\delta_0$ is replaced by $\delta^*$.
In fact the proof is simpler.
\end{rmk}

\section{Main Result}\label{sec:MR}

In this section we present our main results and discuss some corollaries.

\begin{thm}\label{thm:MR}
The free orthogonal quantum group von Neumann algebras $\s{L}\bb{F}O^J_{2N}$ are strongly $1$-bounded when $N\geq 2$.
\end{thm}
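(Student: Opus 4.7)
The plan is to combine the three main ingredients already in place: $1$-boundedness of the Pauli-based self-adjoint generators (Corollary \ref{cor:1BDNS}), the enlargement lemma (Lemma \ref{prop:AADG}), and the classical fact due to Banica \cite{B96} that the character of the fundamental representation of $\bb{F}O^J_{2N}$ is a standard semicircular element, which in particular, via Lemma \ref{lem:fomfe}, has finite microstates free entropy.

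First I would read off from the block decomposition \eqref{eq:uitopm} that the character $\chi(u) = \sum_{k=1}^{2N} u_{kk}$ collapses to $2\sum_{i=1}^N a^u_{ii}$, a self-adjoint element lying in the $*$-algebra generated by the $a^u_{ij},\dots,d^u_{ij}$. I would then adjoin this single element to the $1$-bounded set of Corollary \ref{cor:1BDNS} via Lemma \ref{prop:AADG}. The redundancy hypothesis is immediate. For the regularity hypothesis, pick $T = 2\sum_{i=1}^N (a^u_{ii} + \vep S_{a,ii})$, where the $S$'s denote the free standard semicircular family used in the definition of $d_2$. Then $T\in W^*(a^u_{ij} + \vep S_{a,ij},\dots,d^u_{ij} + \vep S_{d,ij})$ and
\begin{align*}
\norm{\chi(u) - T}_2 = 2\vep \norm{\sum_{i=1}^N S_{a,ii}}_2 = 2\sqrt{N}\,\vep ,
\end{align*}
so $\vep^{-1}d_2(\chi(u);a^u_{11},\dots,d^u_{NN})(\vep)$ is uniformly bounded as $\vep\to 0$, and Lemma \ref{prop:AADG} keeps the enlarged tuple $1$-bounded.

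The enlarged set still generates $\s{L}\bb{F}O^J_{2N}$ and now contains $\chi(u)$, whose spectral distribution is (a rescaling of) the semicircle law. Since the semicircle law has a bounded Lebesgue density, Lemma \ref{lem:fomfe} gives $\chi(\chi(u)) > -\infty$, so Definition \ref{dfn:aBS1B} yields strong $1$-boundedness of $\s{L}\bb{F}O^J_{2N}$. There is no real obstacle at this stage: the substantive work is in Section \ref{sec:GR1B} (establishing $1$-boundedness via the Jung--Shlyakhtenko criterion and regularity of the edge-reversing operator) and Section \ref{sec:ADG} (the enlargement lemma). The present theorem is a clean application of these tools; as noted in the introduction, the same argument applies equally to $\s{L}\bb{F}O_N$ and bypasses the reliance on \cite{BCZJ9} in \cite{BV18}.
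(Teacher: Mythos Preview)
Your proposal is correct and follows essentially the same route as the paper's proof: adjoin the fundamental character $\chi(u)=2\sum_i a^u_{ii}$ to the $1$-bounded generating set via Lemma~\ref{prop:AADG}, verify regularity by the obvious linear approximant, and invoke Banica's result that $\chi(u)$ is semicircular together with Lemma~\ref{lem:fomfe}. The only cosmetic difference is that you use orthogonality of the free semicirculars to get the sharper bound $2\sqrt{N}\,\vep$ where the paper uses the triangle inequality for $2N\vep$; either suffices.
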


\begin{proof}
We check that the fundamental character $\chi^u = (\Tr\ox\iota)(u) = 2(a^u_{11}+\dots+a^u_{NN})$ satisfies the requirements of Proposition \ref{prop:AADG}.
The redundancy assumption is trivial, and for the regularity assumption simply note that plugging in the obvious candidate gives a bound
\begin{align*}
d_2(\chi^u;a^u_{11},\dots,d^u_{NN})(\vep) &\leq \norm{ \chi^u - 2\left( a^u_{11} + \vep S^{a}_{11} + \dots + a^u_{NN} + \vep S^{a}_{NN} \right) }_2  \\
&= \norm{ 2\vep S_{11}^{a} + \dots 2\vep S_{NN}^{a} }_2 \\
&\leq 2N\vep .
\end{align*}
Here $S^\alpha_{ij}$ is a free standard semicircular family, free from $a^u_{11},\dots,d^u_{NN}$.
Thus, the set of generators $\{a^u_{11},\dots,d^u_{NN},\chi^u\}$ is also $1$-bounded by Corollary \ref{cor:1BDNS} and Proposition \ref{prop:AADG}.

By \cite{B96}, $\chi^u$ is a semicircular element and hence possesses a continuous density with respect to the Lebesgue measure.
Lemma \ref{lem:fomfe} then allows us to conclude that $\chi(\chi^u)$, i.e.\ the microstates free entropy of the fundamental character, is finite.
We conclude that $\s{L}\bb{F}O^J_{2N}$ is strongly $1$-bounded.
\end{proof}

\begin{rmk}
The proof of Theorem \ref{thm:MR} also extends to strong $1$-boundedness with respect to $\delta^*$ when combined with Remark \ref{rmk:ARGFDS} and recalling that the proof of Corollary \ref{cor:1BDNS} also goes through for $\delta^*$ due to the statement of Theorem \ref{thm:CF1BD}.
\end{rmk}

\begin{cor}\label{cor:CTMR}
Let $Q\in \mathrm{GL}_M(\C)$, $M\geq 3$, be such that $Q\ol{Q} \in \C I_M$ and $\bb{F}O(Q)$ is unimodular, then $\s{L}\bb{F}O(Q)$ is not isomorphic to any finite von Neumann algebra admitting a tuple of self-adjoint generators whose (modified) free entropy dimension exceeds 1.
In particular this excludes being isomorphic to a(n interpolated) free group factor.
\end{cor}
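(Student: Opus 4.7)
The plan is a direct assembly of three inputs: the classification of the unimodular free orthogonal quantum groups, the two strong $1$-boundedness results that cover them, and Jung's criterion relating strong $1$-boundedness to the free entropy dimension of arbitrary generating tuples.

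First I would invoke the classification recalled in Section~\ref{ssec:FOQG} (from \cite[Section~9.1]{B17}): any $Q\in\mathrm{GL}_M(\C)$ with $Q\ol{Q}\in\C I_M$ such that $\bb{F}O(Q)$ is unimodular yields a quantum group isomorphic to either $\bb{F}O_M$ or, when $M=2N$ is even, $\bb{F}O^J_{2N}$. Under the hypothesis $M\geq 3$, this translates to $N\geq 3$ in the first case and $N\geq 2$ in the second, which matches exactly the ranges covered by the known strong $1$-boundedness theorems: by Brannan and Vergnioux \cite{BV18} in the $\bb{F}O_N$ case, and by Theorem~\ref{thm:MR} of the present paper in the $\bb{F}O^J_{2N}$ case. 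In either situation, $\s{L}\bb{F}O(Q)$ is therefore strongly $1$-bounded.

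Next I would apply Jung's theorem \cite[Section~3]{J07}: every finite tuple of self-adjoint generators of a strongly $1$-bounded finite von Neumann algebra has modified microstates free entropy dimension at most $1$. This already forbids $\s{L}\bb{F}O(Q)$ from being isomorphic to any finite von Neumann algebra generated by a self-adjoint tuple with $\delta_0>1$. To handle the ``(modified)'' variant using $\delta^*$, I would appeal to the Biane--Capitaine--Guionnet inequality $\chi^*\geq\chi$ of \cite{BCG03} (noted in the remark at the end of Section~\ref{ssec:FPaDCO}), which propagates the $\delta_0$-bound to $\delta^*$-boundedness and yields the same obstruction. The ``in particular'' clause is then immediate, since any interpolated free group factor $\s{L}\bb{F}_r$ with $1<r\leq\infty$ admits a self-adjoint generating tuple (a free standard semicircular family in the integer case, and a standard model for non-integer $r$) whose (modified) free entropy dimension equals $r>1$.

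There is no genuine obstacle in this argument: it is essentially a concatenation of existing results. The only point requiring care is confirming that the dimensional hypothesis $M\geq 3$ lines up with the ranges $N\geq 3$ and $N\geq 2$ available from the two underlying $1$-boundedness theorems, which it does.
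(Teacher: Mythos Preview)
Your core argument is correct and matches the paper's own proof: invoke the classification from \cite[Section~9.1]{B17} to reduce to the two families $\bb{F}O_M$ and $\bb{F}O^J_{2N}$, then apply \cite{BV18} and Theorem~\ref{thm:MR} respectively, and finish with Jung's result that any generating tuple of a strongly $1$-bounded algebra has $\delta_0\leq 1$. The paper's proof is essentially these same two sentences, so you have not missed anything.

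One correction, however. Your handling of the $\delta^*$ variant is backwards. The Biane--Capitaine--Guionnet inequality $\chi^*\geq\chi$ says that $\alpha$-boundedness for $\delta^*$ \emph{implies} $\alpha$-boundedness for $\delta_0$, not the converse (this is exactly what the remark at the end of Section~\ref{ssec:FPaDCO} states). So you cannot use BCG to propagate the $\delta_0$-bound up to a $\delta^*$-bound. Fortunately this digression is unnecessary: the phrase ``(modified) free entropy dimension'' in the corollary refers to $\delta_0$, not to $\delta^*$. If one did want the $\delta^*$ statement, the paper obtains it directly (see the remark following Theorem~\ref{thm:MR}, which traces back through Remark~\ref{rmk:ARGFDS} and the $\delta^*$ clause of Theorem~\ref{thm:CF1BD}), rather than via BCG.
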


\begin{proof}
By the discussion at the start of section 9.1 in \cite{B17}, it follows that (up to isomorphism) the only two family of matrices satisfying the assumptions are the identity matrices $I_M$, and when $M = 2N$ the standard symplectic matrices $J_{2N}$.
These two cases are covered by Corollary 4.4 in \cite{BV18} and Theorem \ref{thm:MR} above.
\end{proof}

In fact, the class of von Neumann algebras to which $\s{L}\bb{F}O(Q)$ cannot be isomorphic contains all countable free products of finitely generated, diffuse, tracial, Connes embeddable von Neumann algebras by Lemma 3.7 of \cite{J07}.
The free perturbation algebras of Brown \cite{Br05} are also in this class.

\end{document}